\theoremstyle{plain}
\newtheorem{theorem}{Theorem}[section]
\newtheorem{proposition}{Proposition}[section]
\newtheorem{corollary}{Corollary}[section]
\newtheorem{lemma}{Lemma}[section]
\theoremstyle{remark}
\newtheorem{remark}{Remark}[section]
\newtheorem{examples}{Examples}[section]
\newtheorem{assumption}{Assumption}[section]
\DeclareMathOperator{\supp}{supp}
\DeclareMathOperator{\lin}{span}
\DeclareMathOperator{\cpct}{Cap}
\begin{document}

\title{Energy measure closability for Dirichlet forms}
\author{Michael Hinz$^1$}
\address{Mathematisches Institut, Friedrich-Schiller-Universit\"at Jena, Ernst-Abbe-Platz 2, 07737, Germany and Department of Mathematics,
University of Connecticut,
Storrs, CT 06269-3009 USA}
\email{Michael.Hinz.1@uni-jena.de and Michael.Hinz@uconn.edu}
\thanks{$^1$Research supported in part by NSF grant DMS-0505622 and by the Alexander von Humboldt Foundation Feodor (Lynen Research Fellowship Program)}
\author{Alexander Teplyaev$^2$}
\address{Department of Mathematics, University of Connecticut, Storrs, CT 06269-3009 USA}
\email{Alexander.Teplyaev@uconn.edu}
\thanks{$^2$Research supported in part by NSF grant DMS-0505622}

\date{\today}

\begin{abstract}
We consider symmetric Dirichlet forms on locally compact and non-locally compact spaces and provide an elementary proof for their closability with respect to energy dominant measures. We also discuss how to use known potential theoretic results to furnish an alternative proof 
of this theorem. 
\tableofcontents
\end{abstract}
\maketitle

\section{Introduction and setup}\label{S:Intro} 

The article is concerned with the closability of symmetric Dirichlet forms under a change of measure to an energy dominant reference measure, i.e. a measure with respect to which energy densites exist. It is well known that the existence of energy densities with respect to the reference measure entails a number of desirable properties, see our discussion below and \cite[Chapter I]{BH91}. In classical cases this property is immediate, cf. Example \ref{Ex:classical}, and often it is just assumed, but for some Dirichlet forms on fractals is atypical. In these cases we can turn to energy dominant measures in order to obtain natural constructions and statements, \cite{Hino08, Hino10, Hino11, Ku89, N85, T08}. We have encountered the need to use energy dominant measures when dealing with fiberwise representations for spaces of $1$-forms respectively vector fields in \cite[Section 2]{HRT}, see also \cite{AR89, AR90, Eb99} in this context. However, except for a few special cases (e.g. for local resistance forms or transient local Dirichlet forms) related closability results were not established. Here we prove that any regular symmetric Dirichlet form (together with its core) is closable with respect to any energy dominant measure. This produces a new Dirichlet form that has energy densities. We also provide a version of this result that applies to Dirichlet forms on measure spaces and therefore in particular to forms on non-locally compact topological spaces. To continue this introduction we collect some facts and notions.

For a Dirichlet form $(\mathcal{E},\mathcal{F})$ on the space $L_2(X,\mu)$ of $\mu$-square integrable functions on a measure space $(X,\mathcal{X},\mu)$ the subspace $\mathcal{F}\cap L_\infty(X,\mu)$ of $\mathcal{F}$ is an algebra (with multiplication defined pointwise $\mu$-a.e), and in particular
\begin{equation}\label{E:pointwisemult}
\mathcal{E}(fg)^{1/2}\leq \mathcal{E}(f)^{1/2}\left\|g\right\|_{L_\infty(X,\mu)}+\mathcal{E}(g)^{1/2}\left\|f\right\|_{L_\infty(X,\mu)},
\end{equation}
\cite[Corollary 3.3.2]{BH91}. If $X$ is a locally compact Hausdorff space, $\mu$ a nonnegative Radon measure on $X$ with full support and $(\mathcal{E},\mathcal{F})$ a regular symmetric Dirichlet form on $L_2(X,\mu)$ with core $\mathcal{C}$, then we have $\mathcal{C}\subset \mathcal{F}\cap L_\infty(X,\mu)$, and for any $f\in \mathcal{F}\cap L_\infty(X,\mu)$ the expression
\[L_f(\varphi):=\mathcal{E}(\varphi f,f)-\frac12\mathcal{E}(f^2,\varphi), \ \ \varphi\in\mathcal{C},\]
extends uniquely to a bounded and nonnegative linear functional $L_f$ on $C_0(X)$ which may be written 
\[L_f(\varphi)=\int_X\varphi d\Gamma(f), \ \ \varphi\in C_0(X),\]
with a uniquely determined Radon measure $\Gamma(f)$ on $X$, the \emph{energy measure} of $f$. By approximation energy measures $\Gamma(f)$ can be defined for any $f\in\mathcal{F}$, and by polarization we can subsequently define mutual energy measures $\Gamma(f,g)$ for any $f,g\in\mathcal{F}$. For details see for instance \cite{FOT94, LJ78, Sturm94, Sturm95}.

Following Hino \cite[Definition 2.1]{Hino10} we call a Radon measure $m$ on $X$ with full support an \emph{energy dominant measure for $(\mathcal{E},\mathcal{F})$} if all energy measures $\Gamma(f)$, $f\in\mathcal{F}$, are absolutely continuous with respect to $m$. An energy dominant measure for $(\mathcal{E},\mathcal{F})$ is called \emph{minimal} if it is absolutely continuous with respect to any other energy dominant measure for $(\mathcal{E},\mathcal{F})$, \cite[Definition 2.1]{Hino10}. If $m$ is a minimal energy dominant measure for $(\mathcal{E},\mathcal{F})$, then for any Borel set $B\subset X$ we have $m(B)=0$ if and only if $\Gamma(f)(B)=0$ for all $f\in\mathcal{F}$, \cite[Lemma 2.4]{Hino10}. In some sense minimal energy dominant measures are optimally adapted reference measures for Dirichlet forms, in particular in view of Lemma \ref{L:changemeasure} and Theorem \ref{T:changeofmeasure} below.

If $m$ is energy dominant with respect to $(\mathcal{E},\mathcal{F})$ then the Radon-Nikodym density 
\begin{equation}\label{E:RNdensity}
\Gamma(f):=\frac{d\Gamma(f)}{dm}
\end{equation}
of $\Gamma(f)$ with respect to $m$ is in $L_1(X,m)$ for any $f\in\mathcal{F}$. To $\Gamma(f)$ we refer as the \emph{energy density} of $f$ with respect to $m$. If there exist a unique positive and continuous bilinear form $\Gamma:\mathcal{F}\times\mathcal{F}\to L_1(X,\mu)$ such that 
\[\mathcal{E}(\varphi f,f)-\frac12\mathcal{E}(f^2,\varphi)=\int_X\varphi \Gamma(f)d\mu \]
for all $\varphi, f\in \mathcal{F}\cap L_\infty(X,\mu)$, then we say that $(\mathcal{E},\mathcal{F})$ admits a \emph{carr\'e du champ} (or \emph{square field operator}), see \cite[Chapter I, Definition 4.1.2]{BH91}. The original reference measure $\mu$ itself is energy dominant for $(\mathcal{E},\mathcal{F})$ if and only if $(\mathcal{E},\mathcal{F})$ admits a carr\'e du champ. In many cases the original reference measure $\mu$ itself is energy dominant. Finite dimensional examples for Dirichlet forms having this property are for instance provided by elliptic operators with regular coefficients on Euclidean domains or on smooth manifolds.
\begin{examples}\label{Ex:classical}
The most classical case is given by the Dirichlet integral
\[\mathcal{E}(f):=\int_\Omega |\nabla f(x)|^2 dx,\ \ f\in\mathcal{F},\]
where $\mathcal{F}$ is taken to be the $\mathcal{E}_1$-closure of $C_0^\infty(\Omega)$ in $L_2(\Omega)$, $\Omega\subset \mathbb{R}^n$ being a smooth domain. In this case the energy measure of $f\in\mathcal{F}$ is given by 
\[\Gamma(f)(A)=\int_A|\nabla f(x)|^2 dx, \ \ \text{$A\subset \Omega$ Borel}, \]
and the energy density with respect to the $n$-dimensional Lebesgue measure (restricted to $\Omega)$ is $\Gamma(f)=|\nabla f|^2$.
For an arbitrary symmetric regular Dirichlet form $(\mathcal{E},\mathcal{F})$ the energy density $\Gamma(f)$ may therefore be seen as a generalization of the square of the gradient $|\nabla f|^2$ and the energy measure as its more general measure version.
\end{examples} 

An example for a Dirichlet form on an infinite dimensional (and therefore non-locally compact) space that admits a carr\'e du champ is the Ornstein-Uhlenbeck form on the Wiener space, cf. \cite[Chapter II]{BH91}. However, if $(\mathcal{E},\mathcal{F})$ is a local Dirichlet form on a fractal and $\mu$ is the natural Hausdorff measure, then it may happen that \emph{all energy measures are singular with respect to $\mu$}. This behaviour is typical for diffusions associated with resistance forms on p.c.f. self-similar fractals and in many cases also for diffusions on Sierpinski carpets, \cite{BBST, Hino03, Hino05}. Though the original reference measure $\mu$ may fail to be energy dominant, energy dominant measures always exist and can be constructed explicitely. The following lemma is a version of \cite[Lemmas 2.2, 2.3 and 2.4]{Hino10}, \cite[Lemma 2.2]{HRT} and \cite[Lemma 2.2]{N85}. 

\begin{lemma}\label{L:changemeasure}\mbox{}
Let $X$ be a locally compact Hausdorff space, $\mu$ a Radon measure on $X$ with full support and $(\mathcal{E},\mathcal{F})$ a regular symmetric Dirichlet form on $L_2(X,\mu)$. Then there exists a minimal energy dominant measure $m$ for $(\mathcal{E},\mathcal{F})$. It is unique up to mutual equivalence of energy dominant measures. We may choose $m$ to be finite. 
\end{lemma}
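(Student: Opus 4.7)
The plan is to construct $m$ as a countable weighted sum of energy measures of a dense family of probes taken from the core $\mathcal{C}$. First, using the $\mathcal{E}_1$-separability of $\mathcal{F}$ (which follows from separability of $\mathcal{C}$), I would fix a countable family $\{\phi_n\}_{n\ge 1}\subset\mathcal{C}$ that is $\mathcal{E}_1$-dense in $\mathcal{F}$, each $\phi_n$ being compactly supported. For such $\phi_n$ the energy measure $\Gamma(\phi_n)$ is a finite Radon measure on $X$, with total mass bounded by a constant multiple of $\mathcal{E}(\phi_n,\phi_n)$. I then set
\[m:=\sum_{n=1}^{\infty}\frac{1}{2^{n}(1+\Gamma(\phi_n)(X))}\,\Gamma(\phi_n),\]
so that $m$ is a positive Radon measure with $m(X)\le 1$, hence finite.

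To verify full support, I take any non-empty open $U\subset X$: by regularity I can pick $\phi\in\mathcal{C}$ with $\supp\phi\subset U$ and $\phi\not\equiv 0$, giving $\Gamma(\phi)(U)>0$, and $\mathcal{E}_1$-approximation by the $\phi_n$'s then forces $\Gamma(\phi_{n_k})(U)>0$ for some large $k$, so that $m(U)>0$. To show $m$ is energy dominant, I appeal to the set-wise Cauchy--Schwarz inequality for energy measures,
\[|\Gamma(g,h)(B)|\le\Gamma(g)(B)^{1/2}\Gamma(h)(B)^{1/2},\qquad B\subset X\text{ Borel},\]
which follows from positivity of $\Gamma(g+th)$ in $t$ and yields the triangle inequality $\Gamma(f)(B)^{1/2}\le\Gamma(f-\phi_{n_k})(B)^{1/2}+\Gamma(\phi_{n_k})(B)^{1/2}$. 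Fixing $f\in\mathcal{F}$ and a Borel set $B$ with $m(B)=0$, each $\Gamma(\phi_n)(B)$ vanishes, and by choosing $\phi_{n_k}\to f$ in $\mathcal{E}_1$ the remainder satisfies $\Gamma(f-\phi_{n_k})(B)\le\Gamma(f-\phi_{n_k})(X)\to 0$. Thus $\Gamma(f)(B)=0$ and $\Gamma(f)\ll m$.

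Minimality and uniqueness fall out from the same construction: if $m'$ is any other energy dominant measure and $m'(B)=0$, then every $\Gamma(\phi_n)(B)$ vanishes, so $m(B)=0$ and hence $m\ll m'$. Any two minimal energy dominant measures are therefore mutually absolutely continuous, i.e.\ equivalent.

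The main potential obstacles I anticipate are (i) securing full support when the form has a non-local component, since then $\Gamma(\phi)$ need not be concentrated on $\supp\phi$, and (ii) the tail estimate $\Gamma(f-\phi_{n_k})(X)\to 0$, which rests on a standard bound $\Gamma(g)(X)\le C\,\mathcal{E}_1(g,g)$ coming from the Beurling--Deny decomposition. Both are addressable by selecting the probes $\phi_n$ inside the core with a little care.
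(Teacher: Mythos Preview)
Your construction and the arguments for energy dominance, minimality, and uniqueness are correct and coincide with the paper's sketch: a weighted sum $m=\sum_n a_n\Gamma(\phi_n)$ over a countable $\mathcal{E}_1$-dense family, with dominance proved via the set-wise Cauchy--Schwarz/triangle inequality for energy measures.

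The gap is in your full-support argument. The step ``$\phi\not\equiv 0$, $\supp\phi\subset U$, hence $\Gamma(\phi)(U)>0$'' is unjustified and can fail. A nonzero compactly supported function need not have positive energy: think of a pure killing form $\mathcal{E}(f)=\int f^2\,d\kappa$ with $\kappa$ not of full support, or a strongly local form whose carr\'e vanishes on an open set. In such cases every energy measure is null on some fixed open $U$, and no sum of energy measures can have full support there. Your anticipated obstacle (i) misdiagnoses the difficulty: non-locality would, if anything, spread $\Gamma(\phi)$ \emph{outward} beyond $\supp\phi$, which helps rather than hurts full support; the real obstruction is that $\Gamma(\phi)$ may simply vanish. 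In fact the paper's own one-line sketch does not address full support either, and the source it cites (Hino's Definition~2.1) does not impose full support in the notion of energy dominant measure; the requirement seems to be an addendum here that the standard construction does not automatically deliver. A separate minor point: extracting a countable $\mathcal{E}_1$-dense family requires $\mathcal{F}$ (equivalently $L_2(X,\mu)$) to be separable, which does not follow from ``locally compact Hausdorff'' alone; the referenced literature assumes $X$ is a separable metric space, and the paper's sketch tacitly does the same.
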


We use the customary notation $\mathcal{E}_1(f):=\mathcal{E}(f)+\left\|f\right\|^2_{L_2(X,\mu)}$, $f\in \mathcal{F}$. A standard proof of Lemma \ref{L:changemeasure} is to take a countable family $\left\lbrace f_n\right\rbrace_n$ of functions from $\mathcal{F}$ such that $0<\mathcal{E}(f)\leq 1$ and $\lin (\left\lbrace f_n\right\rbrace_n)$ is $\mathcal{E}_1$-dense in $\mathcal{F}$, and to consider
\begin{equation}\label{E:typicalenergydominant}
m:=\sum_n a_n \Gamma(f_n),
\end{equation}
where $(a_n)_n$ is a suitable sequence of positive real numbers. Detailed arguments can be found in \cite{Hino10, HRT, N85}.

If $m$ is energy dominant for $(\mathcal{E},\mathcal{F})$ then the restriction $(\mathcal{E},\mathcal{C})$ of the original Dirichlet form to its core $\mathcal{C}$ is a densely defined nonnegative definite bilinear form on $L_2(X,m)$ that enjoys the Markov property. To perform a full change of measure it remains to be shown that $(\mathcal{E},\mathcal{C})$ is closable in $L_2(X,m)$. In this paper we give an elementary proof of this fact, see Section \ref{S:closability}.

\begin{theorem}\label{T:changeofmeasure} 
Let $X$ be a locally compact Hausdorff space, $\mu$ a Radon measure on $X$ with full support and $(\mathcal{E},\mathcal{F})$ a regular symmetric Dirichlet form on $L_2(X,\mu)$ with core $\mathcal{C}$. If $m$ is a finite energy dominant measure for $(\mathcal{E},\mathcal{F})$ then
$(\mathcal{E},\mathcal{C})$ is closable in $L_2(X,m)$ and its closure $(\mathcal{E},\mathcal{F}^{(m)})$ is a regular symmetric Dirichlet form on $L_2(X,m)$. If $X$ is second countable then the result is true also for infinite $m$.
\end{theorem}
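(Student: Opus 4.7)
The plan is to establish closability of $(\mathcal{E},\mathcal{C})$ in $L_2(X,m)$; once that is in hand, the closure $(\mathcal{E},\mathcal{F}^{(m)})$ is automatically a Dirichlet form, since the Markovian property on $\mathcal{C}$ transfers to the closure by a standard argument and regularity follows from the density of $\mathcal{C}$ in $C_0(X)$ supplied by the regularity of the original form. So I fix a sequence $(f_n)\subset\mathcal{C}$ with $f_n\to 0$ in $L_2(X,m)$ and $\mathcal{E}(f_n-f_k)\to 0$, and aim to show $\mathcal{E}(f_n)\to 0$.

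First, by bilinearity of $\Gamma$ and the Cauchy--Schwarz inequality $|\Gamma(f,g)|(X)\leq\mathcal{E}(f)^{1/2}\mathcal{E}(g)^{1/2}$, the positive Radon measures $\Gamma(f_n)$ form a Cauchy sequence in total variation, converging to some finite Radon measure $\nu$ with $\nu\ll m$, density $g^{\ast}\in L_1(X,m)$, and $\nu(X)=\lim_n\mathcal{E}(f_n)$. It therefore suffices to show $\nu=0$, and by regularity this reduces to $\int_X\varphi\, d\nu=0$ for every nonnegative $\varphi\in\mathcal{C}$. After passing to a subsequence one may also assume $f_n\to 0$ $m$-a.e.

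The starting point is the identity
\[
\int_X\varphi\, d\Gamma(f_n)=\mathcal{E}(\varphi f_n,f_n)-\tfrac{1}{2}\mathcal{E}(\varphi,f_n^{2}),
\]
valid for $\varphi\in\mathcal{C}$ and $f_n\in\mathcal{F}\cap L_\infty$. The Leibniz rule $d\Gamma(\varphi,f_n^{2})=2 f_n\, d\Gamma(\varphi,f_n)$ combined with Cauchy--Schwarz yields $|\mathcal{E}(\varphi,f_n^{2})|\leq 2\bigl(\int_X f_n^{2}\,d\Gamma(\varphi)\bigr)^{1/2}\mathcal{E}(f_n)^{1/2}$, and since $\Gamma(\varphi)$ has $L_1(m)$-density $h_\varphi$, the integral $\int f_n^{2}h_\varphi\, dm$ tends to zero by dominated convergence---provided $(f_n)$ is bounded in $L_\infty$. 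This bounded case is reached by truncation: with $g_n^{M}:=T_M(f_n)$ and $T_M(t)=(t\wedge M)\vee(-M)$, the chain rule for energy measures gives $\mathcal{E}(f_n-g_n^{M})=\Gamma(f_n)(\{|f_n|>M\})=\int_{\{|f_n|>M\}}g_n\, dm$, which vanishes as $n\to\infty$ since the densities $g_n:=d\Gamma(f_n)/dm$ converge to $g^{\ast}$ in $L_1(X,m)$ while $m(\{|f_n|>M\})\to 0$ by Chebyshev; analogous estimates show that $(g_n^{M})$ remains $\mathcal{E}$-Cauchy in $L_2(X,m)$.

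The main obstacle is to evaluate the remaining term $\mathcal{E}(\varphi f_n,f_n)$, since the naive Leibniz expansion $\mathcal{E}(\varphi f_n,f_n)=\int_X\varphi\, d\Gamma(f_n)+\int_X f_n\, d\Gamma(\varphi,f_n)$ reintroduces $\int\varphi\, d\Gamma(f_n)$ in an apparently circular fashion. The elementary way to close the loop is via the auxiliary identity $\mathcal{E}(f_n^{2})=4\int_X f_n^{2}\, d\Gamma(f_n)$: in the bounded regime, dominated convergence with $\|f_n\|_\infty\leq M$, $f_n\to 0$ $m$-a.e., and $g_n\to g^{\ast}$ in $L_1(m)$ shows $\mathcal{E}(f_n^{2})\to 0$, and then $|\mathcal{E}(f_n^{2},\varphi)|\leq\mathcal{E}(f_n^{2})^{1/2}\mathcal{E}(\varphi)^{1/2}$ together with a careful choice of test functions in $\mathcal{C}$ forces $\int_X\varphi\, d\nu=0$. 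A conceptually cleaner alternative route through potential theory is also available: an $\mathcal{E}$-Cauchy sequence in $\mathcal{F}$ admits a subsequence of quasi-continuous representatives converging quasi-uniformly on compacts to some $u\in\mathcal{F}_e$ with $\mathcal{E}(f_n-u)\to 0$; exploiting that each $f_n$ is continuous, that $u$ is a uniform limit of $f_n$ outside open sets of arbitrarily small $\mathcal{E}$-capacity, and that $m$ has full support, one deduces $u=0$ quasi-everywhere, hence $\mathcal{E}(f_n)\to\mathcal{E}(u)=0$.
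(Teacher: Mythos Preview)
Your argument has two genuine gaps.

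\textbf{First, the derivation identities you use fail outside the strongly local case.} The Leibniz rule $d\Gamma(\varphi,f_n^{2})=2f_n\,d\Gamma(\varphi,f_n)$, the equality $\mathcal{E}(f_n-T_M f_n)=\Gamma(f_n)(\{|f_n|>M\})$, and the identity $\mathcal{E}(f_n^{2})=4\int f_n^{2}\,d\Gamma(f_n)$ are all properties of the \emph{local} part $\Gamma_c$ only; they break down when a jump part is present. (Also note $\Gamma(f)(X)\neq\mathcal{E}(f)$ in general: there is an extra $\tfrac12\mathcal{E}_k(f)$.) The paper confronts this head-on via the Beurling--Deny decomposition $\Gamma=\Gamma_c+\Gamma_j+\tfrac12\Gamma_k$: for the truncation step it proves only the inequality $\mathcal{E}(f-T_\alpha f)\le c\,\Gamma(f)(\{|f|\ge\alpha\})$ with a universal constant, and for the jump part it uses a separate tightness argument (finiteness of $m$, or second countability plus Prohorov) to control the off-diagonal piece.

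\textbf{Second, even in the purely local situation, your resolution of the circularity is incomplete.} Showing $\mathcal{E}(f_n^{2})\to 0$ and invoking Cauchy--Schwarz gives you $\mathcal{E}(\varphi,f_n^{2})\to 0$, but that term was already under control; the hard term is $\mathcal{E}(\varphi f_n,f_n)$, and ``a careful choice of test functions'' does not explain how it is handled. The paper's device here is decisive: truncate not at a fixed level $M$ but at levels $1/k_j\to 0$, obtaining a new $\mathcal{E}$-Cauchy sequence $w_j=T_{1/k_j}(v_j)$ with $\|w_j\|_\infty\to 0$. Then for a cut-off $\varphi\in\mathcal{C}$ with compact support, $\varphi w_j\to 0$ uniformly and hence in $L_2(X,\mu)$, so closability of $(\mathcal{E},\mathcal{C})$ in the \emph{original} space $L_2(X,\mu)$ yields $\mathcal{E}(\varphi w_j)\to 0$. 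This is what breaks the loop; your fixed-$M$ truncation does not produce convergence in $L_2(X,\mu)$, because $\mu$ and $m$ are in general mutually singular.

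Your potential-theoretic alternative is closer in spirit to the paper's Section~4, but as sketched it glosses over the step ``$u=0$ $m$-a.e.\ implies $u=0$ q.e.''\,: this requires showing that $m$ has full quasi-support, which the paper establishes only under an additional irreducibility or transience hypothesis and via a nontrivial perturbation argument.
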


We briefly refer to this circumstance as \emph{energy measure closability}. To our knowledge this Theorem in its present form is new, see however the remarks below and Section \ref{S:alternative} for its connection to preceding and related results. Our proof does not use capacities or quasi-notions, but is based on the Markov property and the Beurling-Deny decomposition together with some uniform integrability arguments. 

We state Theorem \ref{T:changeofmeasure} in the context of regular Dirichlet forms on locally compact spaces in the sense of \cite{FOT94} mainly because in this set up the existence of energy measures and the validity of a suitable Beurling-Deny decomposition are well known. However, we would like to emphasize that if we assume the latter, our method becomes applicable to Dirichlet forms on measure spaces as investigated in \cite[Chapter I]{BH91}, and we can obtain a corresponding result on energy measure closability in the measurable case, Theorem \ref{T:measurablecase}. The precise assumptions and a description of the necessary modifications of our proof can be found in Section \ref{S:measurablecase}.

For the special cases of transient Dirichlet forms on locally compact separable metric spaces and Dirichlet forms induced by resistance forms versions of Theorem \ref{T:changeofmeasure} had already been provided in \cite{HRT}, based on \cite{Ki12} and \cite{RW}, respectively. General results on the closability of Dirichlet forms under a change of measure can be found in \cite{AR90, BrKarw, FLJ, FST, Kuw92, KN91, Loeb03, RW}, see also the references cited in these papers. With Theorem \ref{T:changeofmeasure} in mind \cite{FLJ, FST} and \cite{KN91} are maybe most relevant. In Section \ref{S:alternative} we provide an alternative proof of energy measure closability which uses these references together with a reasoning similar to that of \cite[Lemma 5.1]{HRT}. 

To mention a particular consequence of energy measure closability let $(H, dom\:H)$ and $(H^{(m)}, dom\:H^{(m)})$ denote the infinitesimal generators of the Dirichlet forms $(\mathcal{E},\mathcal{F})$ and $(\mathcal{E},\mathcal{F}^{(m)})$, respectively. We write $(H^{1}, dom\:H^{1})$ for the smallest closed extension in $L_1(X,\mu)$ of the restriction of the operator $H$ to 
\[\left\lbrace f\in dom\:H\cap L_1(X,\mu): Hf\in L_1(X,\mu)\right\rbrace\]
as done in \cite[Proposition 2.4.2]{BH91}, and $(H^{(m),1}, dom\:H^{(m),1})$ if $m$ is used in place of $\mu$. Recall that $dom\:H^1\cap L_\infty(X,\mu)$ is an algebra if and only if $\mu$ is energy dominant, cf. \cite[Theorems 4.2.1 and 4.2.2]{BH91}. However, the stability under multiplication of a dense subspace of the domain of the Markov generator is desirable, because it permits a flexible use of finite energy coordinates, see for instance \cite{HRT, HTb, T08}. It seems that the choice of a suitable volume measure $m$ may be needed in order to be able to work with a particular coordinate system. We fix an immediate consequence of Theorem \ref{T:changeofmeasure} together with \cite[Theorems 4.2.1 and 4.2.2]{BH91}.

\begin{corollary}
Let the hypotheses of Theorem \ref{T:changeofmeasure} be in force. Then $dom\:H^{(m),1}\cap L_\infty(X,m)$ is an algebra, and for all $f\in dom\:H^{(m)}$ we have $f^2\in dom\:H^{(m),1}$.
\end{corollary}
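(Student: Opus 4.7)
The plan is to deduce both assertions from Theorem \ref{T:changeofmeasure} combined with \cite[Theorems 4.2.1 and 4.2.2]{BH91}, once I have verified that $m$ is itself an energy dominant measure for the new Dirichlet form $(\mathcal{E},\mathcal{F}^{(m)})$ on $L_2(X,m)$. The paragraph preceding the corollary identifies energy-domination of the reference measure with the existence of a carr\'e du champ, and the cited Bouleau--Hirsch theorems then say that the latter is equivalent to $dom\:H^{(m),1}\cap L_\infty(X,m)$ being an algebra and to $f^2\in dom\:H^{(m),1}$ for every $f\in dom\:H^{(m)}$, which are precisely the two claims of the corollary.

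My first step is a trivial comparison: for $f\in\mathcal{C}$, the defining functional $\varphi\mapsto\mathcal{E}(\varphi f,f)-\tfrac12\mathcal{E}(f^2,\varphi)$ on $\varphi\in\mathcal{C}$ is literally the same whether $\mathcal{C}$ is regarded as a core of $(\mathcal{E},\mathcal{F})$ or of the closure $(\mathcal{E},\mathcal{F}^{(m)})$. Hence the two candidate energy measures for $f\in\mathcal{C}$ coincide, and because $m$ is energy dominant for the original form by hypothesis, $\Gamma(f)\ll m$ for every $f\in\mathcal{C}$.

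The main step --- and the place where I expect the only real content --- is extending $\Gamma(f)\ll m$ from $\mathcal{C}$ to all of $\mathcal{F}^{(m)}$. Given $f\in\mathcal{F}^{(m)}$, I would pick $f_n\in\mathcal{C}$ with $\mathcal{E}_1^{(m)}(f-f_n)\to 0$. Using the triangle inequality for the positive semi-definite bilinear form $\Gamma$ together with the standard Beurling--Deny bound $\Gamma(f_n-f)(X)\leq 2\mathcal{E}(f_n-f)$, one obtains
\[\left|\Gamma(f_n)(B)^{1/2}-\Gamma(f)(B)^{1/2}\right|\leq \Gamma(f_n-f)(B)^{1/2}\leq \Gamma(f_n-f)(X)^{1/2}\to 0\]
uniformly in the Borel set $B\subset X$. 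For $B$ with $m(B)=0$ the first step gives $\Gamma(f_n)(B)=0$ for every $n$, so passing to the limit forces $\Gamma(f)(B)=0$. This establishes $\Gamma(f)\ll m$ for all $f\in\mathcal{F}^{(m)}$, i.e.\ that $m$ is energy dominant for $(\mathcal{E},\mathcal{F}^{(m)})$.

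With this in hand, both conclusions of the corollary are immediate restatements of \cite[Theorem 4.2.1]{BH91} and \cite[Theorem 4.2.2]{BH91} applied to the form $(\mathcal{E},\mathcal{F}^{(m)})$ and its generator $H^{(m)}$ on $L_2(X,m)$. The only point requiring care is the approximation argument in the third paragraph, which relies on continuity of energy measures in the $\mathcal{E}_1$-topology; everything else is a direct quotation of Theorem \ref{T:changeofmeasure} or of Bouleau--Hirsch.
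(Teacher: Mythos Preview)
Your proposal is correct and follows exactly the route the paper intends: the corollary is stated there without proof as ``an immediate consequence of Theorem \ref{T:changeofmeasure} together with \cite[Theorems 4.2.1 and 4.2.2]{BH91}'', and you have simply spelled out the implicit verification that $m$ is energy dominant for $(\mathcal{E},\mathcal{F}^{(m)})$ (agreement of energy measures on $\mathcal{C}$, then the standard $\mathcal{E}$-continuity argument to extend to $\mathcal{F}^{(m)}$). The only cosmetic remark is that with the paper's normalization one has $\Gamma(g)(X)\leq\mathcal{E}(g)$, so the factor $2$ in your bound is unnecessary but harmless.
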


In \cite{HKT} we have discussed a local-compactification method for Dirichlet forms. Any Dirichlet form $(\mathcal{E},\mathcal{F})$ on a measurable space can be transferred into a regular Dirichlet form $(\hat{\mathcal{E}},\hat{\mathcal{F}})$ on the Gelfand spectrum $\Delta$ of the uniform complex closure of an algebra of bounded energy finite functions. By Lemma \ref{L:changemeasure} we can always find a finite energy dominant measure $m$ for the transferred Dirichlet form $(\hat{\mathcal{E}},\hat{\mathcal{F}})$, and as a consequence of Theorem \ref{T:changeofmeasure}, the Dirichlet form $(\hat{\mathcal{E}},\hat{\mathcal{F}}^{(m)})$ obtained by change of measure always admits a carr\'e operator with respect to $m$. More details are provided in Section \ref{S:transfer}.

\begin{remark}
Theorem \ref{T:changeofmeasure} can for instance be used to generalize our previous results on the essential self-adjointness of magnetic Schr\"odinger Hamiltonians on fractals. In \cite{HTb} we have studied this issue for Dirichlet forms induced by local regular resistance forms \cite{Ki03, Ki12, T08}. Theorem \ref{T:changeofmeasure} allows to immediately extend the method of \cite{HTb} to regular symmetric Dirichlet forms on fractals that are not necessarily induced by resistance forms, such as for example local regular Dirichlet forms on generalized Sierpinski carpets with spectral dimension $d_S\geq 2$. Some background can be found in \cite{Ba, BB99, BBKT}.
\end{remark}

\section{Proof of energy measure closability}\label{S:closability}

In this section we prove Theorem \ref{T:changeofmeasure}. As before, let $(\mathcal{E},\mathcal{F})$ be a symmetric regular Dirichlet form on $L_2(X,\mu)$.

We first recall the Beurling-Deny decomposition, \cite[Theorem 3.2.1]{FOT94}, which itself may be seen as a consequence of the Markov property, see \cite[Th\'eor\`{e}me 1]{Allain}, \cite[Theorem 1.1 and Theorem 2.3]{Andersson}. Given $f\in\mathcal{C}$ we have
\begin{equation}
\mathcal{E}(f)=\mathcal{E}_c(f)+\mathcal{E}_j(f)+\mathcal{E}_k(f),
\end{equation}
where 
\[\mathcal{E}_c(f)=\int_Xd\Gamma_c(f)\]
is a symmetric strongly local bilinear form with domain $\mathcal{C}$ and $\Gamma_c(f)$ is the local part \cite[p. 113]{FOT94} of the energy measure $\Gamma(f)$ of $f$ (with a normalization different from that in \cite{FOT94}), 
\[\mathcal{E}_j(f)=\int_Xd\Gamma_j(f)\]
is a purely non-local form with 
\[\Gamma_j(f)(dx):=\int_X (f(x)-f(y)^2J(dxdy),\]
where $J$ is a symmetric nonnegative Radon measure on $X\times X\setminus \left\lbrace (x,x):x\in X\right\rbrace$, and
\[\mathcal{E}_k(f)=\int_Xd\Gamma_k(f)\]
is the killing part of $\mathcal{E}(f)$, where 
\[\Gamma_k(f)(dx):=f(x)^2\kappa(dx)\]
with a nonnegative Radon measure $\kappa$ on $X$. As a consequence we see that 
\begin{equation}\label{E:splitintegral}
\int_X\varphi\:d\Gamma(f)=\int_X \varphi\:d\Gamma_c(f)+\int_X\varphi\:d\Gamma_j(f)+\frac{1}{2}\int_X \varphi\:d\Gamma_k(f)
\end{equation}
for any $\varphi\in \mathcal{C}$, cf. \cite[page 114]{FOT94}, and as the measures $\Gamma_c(f)$, $\Gamma_j(f)$ and $\Gamma_k(f)$ are finite, equality (\ref{E:splitintegral}) extends to any bounded Borel function $\varphi$ on $X$ by bounded convergence. This also shows that
\begin{equation}\label{E:BDformeasures}
\Gamma(f)=\Gamma_c(f)+\Gamma_j(f)+\frac{1}{2}\Gamma_k(f),
\end{equation}
seen as an equality of nonnegative Radon measures on $X$. In particular, $\mathcal{E}(f)=\Gamma(f)(X)+\frac{1}{2}\mathcal{E}_k(f)$. For further details see \cite[Sections 3.2 and 5.3]{FOT94} or \cite[Section 2]{N85}. 

We will now record a simple consequence of the Markov property. For any $\alpha>0$ set
\begin{equation}\label{E:Talpha}
T_{\alpha}(y):=\begin{cases}
-\alpha\ &\text{ if $y\leq -\alpha$}\\
y\ &\text{ if $-\alpha < y <\alpha$}\\
\alpha\ &\text{ if $\alpha\leq y$.}\end{cases}
\end{equation}
Clearly each $T_{\alpha}$ is a Lipschitz function with $T_{\alpha}(0)=0$ and therefore $f\in\mathcal{C}$ implies $T_{\alpha}(f)\in\mathcal{C}$. 

\begin{lemma}\label{L:dominate}
There is a constant $c>0$ such that for any $f\in\mathcal{C}$ and any $\alpha>0$ we have
\begin{equation}\label{E:dominate}
\mathcal{E}(f-T_\alpha(f))\leq c\:\Gamma(f)(\left\lbrace |f|\geq \alpha\right\rbrace).
\end{equation}
\end{lemma}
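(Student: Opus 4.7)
The plan is to set $g := f - T_\alpha(f)$, observe that $\phi(y) := y - T_\alpha(y)$ is Lipschitz with $\phi(0)=0$ so that $g = \phi(f) \in \mathcal{C}$, and then bound $\mathcal{E}(g)$ by handling each Beurling--Deny component separately. The key pointwise facts are that $g(x) = 0$ on $\{|f| \leq \alpha\}$ and $|g(x)| = |f(x)| - \alpha \leq |f(x)|$ on $\{|f| \geq \alpha\}$. The target constant should come out to $c = 2$, the $2$ arising from symmetrization of the jump measure $J$ and from the factor $\tfrac12$ in front of $\Gamma_k(f)$ in (\ref{E:BDformeasures}).

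For the killing part this is immediate: since $g^2 \leq f^2 \mathbf{1}_{\{|f|\geq\alpha\}}$ pointwise,
\[\mathcal{E}_k(g) = \int_X g^2 \, d\kappa \leq \int_{\{|f|\geq\alpha\}} f^2 \, d\kappa = \Gamma_k(f)\bigl(\{|f|\geq\alpha\}\bigr).\]
For the jump part, a short case check (both $|f(x)|,|f(y)| \leq \alpha$; mixed; both of one sign; opposite signs) gives $(g(x)-g(y))^2 \leq (f(x)-f(y))^2$ always, and $(g(x)-g(y))^2 = 0$ whenever both $|f(x)|, |f(y)| \leq \alpha$. Splitting the integration set into $\{|f(x)| > \alpha\}$ and its transpose and using the symmetry of $J$ then yields $\mathcal{E}_j(g) \leq 2\,\Gamma_j(f)(\{|f|\geq\alpha\})$.

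The local part is the most delicate step, since one needs a chain-rule statement for the strongly local $\mathcal{E}_c$ applied to a merely Lipschitz function. The cleanest route is to use the strong locality of $\Gamma_c$: $T_\alpha(f)$ equals the constant $\pm\alpha$ on $\{|f|>\alpha\}$ and equals $f$ on $\{|f|<\alpha\}$, while $g$ vanishes on $\{|f|\leq\alpha\}$, so strong locality gives $\Gamma_c(g, T_\alpha(f)) = 0$ and $\Gamma_c(g) = \mathbf{1}_{\{|f|>\alpha\}} \Gamma_c(f)$. Consequently $\mathcal{E}_c(g) \leq \Gamma_c(f)(\{|f|\geq\alpha\})$.

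Adding the three estimates and using (\ref{E:BDformeasures}),
\[\mathcal{E}(g) \leq \Gamma_c(f)(\{|f|\geq\alpha\}) + 2\,\Gamma_j(f)(\{|f|\geq\alpha\}) + \Gamma_k(f)(\{|f|\geq\alpha\}) \leq 2\,\Gamma(f)(\{|f|\geq\alpha\}),\]
which yields (\ref{E:dominate}) with $c = 2$. The main obstacle is the strongly local piece: one must either invoke the Lipschitz chain rule for $\mathcal{E}_c$ on $\mathcal{C}$ or, as suggested above, exploit strong locality together with the observation that $T_\alpha(f)$ is piecewise constant on the support of $g$; everything else reduces to Cauchy--Schwarz-free pointwise inequalities.
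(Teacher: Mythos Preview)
Your proof is correct and follows essentially the same route as the paper: split $\mathcal{E}(f-T_\alpha(f))$ via the Beurling--Deny decomposition, use strong locality of $\Gamma_c$ (that $g$ vanishes on the open set $\{|f|<\alpha\}$ and $T_\alpha(f)$ is locally constant on $\{|f|>\alpha\}$) for the local piece, the $1$-Lipschitz property of $y\mapsto y-T_\alpha(y)$ together with the symmetry of $J$ for the jump piece, and the pointwise bound $g^2\leq f^2\mathbf{1}_{\{|f|\geq\alpha\}}$ for the killing piece. Your constants are a bit sharper than the paper's (the paper uses the cruder estimate $I(x,y)\leq 4(f(x)-f(y))^2$ and obtains $\mathcal{E}_j(g)\leq 8\,\Gamma_j(f)(\{|f|\geq\alpha\})$, $\mathcal{E}_k(g)\leq 4\,\Gamma_k(f)(\{|f|\geq\alpha\})$), but the structure of the argument is identical.
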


For the local part $\mathcal{E}_c$ we observe the stronger result
\begin{equation}\label{E:stronger}
\mathcal{E}_c(f-T_\alpha(f))=\Gamma_c(f)(\left\lbrace |f|\geq \alpha\right\rbrace).
\end{equation}

\begin{proof} Let $U$ be a relatively compact open neighborhood of $\supp\:f$. Then also $U\cap \left\lbrace |f|<\alpha\right\rbrace$ is open and relatively compact. On this set we have $f-T_\alpha(f)=0$ by (\ref{E:Talpha}), what implies 
\[\mathcal{E}_c(f-T_\alpha(f))=\Gamma_c(f-T_\alpha(f))(\left\lbrace |f|\geq \alpha\right\rbrace),\]
by the locality of $\Gamma_c$, see for instance \cite[Corollary 3.2.1]{FOT94}. By the same reason $\Gamma_c(\alpha)(U)=0$, and with Cauchy-Schwarz 
the identity (\ref{E:stronger}) follows. For the non-local part note that by the contraction properties of $T_\alpha$,
\[I(x,y):=(f(x)-T_\alpha(f)(x)-f(y)+T_\alpha(f)(y))^2\leq 4(f(x)-f(y))^2\]
for any $x,y\in X$, and by (\ref{E:Talpha}),
\[\mathcal{E}_j(f-T_\alpha(f))=\int_{\left\lbrace |f|\geq \alpha\right\rbrace}\int_X I(x,y)J(dxdy)+\int_{\left\lbrace |f|< \alpha\right\rbrace}\int_{\left\lbrace |f|\geq \alpha\right\rbrace} I(x,y)J(dxdy).\]
For the second summand we may use the symmetry of $J$ to see it is bounded by the first. Consequently
\[\mathcal{E}_j(f-T_\alpha(f))\leq 8\Gamma_j(f)(\left\lbrace |f|\geq \alpha\right\rbrace).\]
For the killing part we have
\[\mathcal{E}_k(f-T_\alpha)\leq 4 \int_{\left\lbrace |f|\geq \alpha\right\rbrace}f^2d\kappa=4\Gamma_k(f)(\left\lbrace |f|\geq \alpha\right\rbrace),\]
and identity (\ref{E:BDformeasures}) finally implies (\ref{E:dominate}).
\end{proof}

Now recall that $m$ is an energy dominant measure for $(\mathcal{E},\mathcal{F})$. Let us agree to use notation (\ref{E:RNdensity}) with this very $m$. The next lemma is an observation concerning the uniform $m$-integrability of the energy densities of $\mathcal{E}$-Cauchy sequences. In a similar form this argument appeared already in \cite[Lemma 2.1]{Schmu92}, we sketch it for completeness.

\begin{lemma}\label{L:UI}
Let $(u_n)_n\subset \mathcal{F}$ be an $\mathcal{E}$-Cauchy sequence. Then the sequence $(\Gamma(u_n))_n$ is convergent in $L_1(X,m)$ and in particular, uniformly $m$-integrable, i.e. for any $\varepsilon>0$ we can find some $\delta>0$ such that for any Borel set $A\subset X$, $m(A)<\delta$ implies
\[\sup_n \Gamma(u_n)(A)<\varepsilon.\]
\end{lemma}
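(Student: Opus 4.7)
My plan is to show directly that $(\Gamma(u_n))_n$ is Cauchy in $L_1(X,m)$, from which convergence and uniform $m$-integrability both follow. The key tool is the Cauchy-Schwarz inequality for mutual energy measures, combined with the polarization identity $\Gamma(u_n)-\Gamma(u_m)=\Gamma(u_n-u_m,u_n+u_m)$ (the analogue of $a^2-b^2=(a-b)(a+b)$, which persists for the bilinear extension of $\Gamma(\cdot,\cdot)$ to $\mathcal{F}$). Passing to Radon-Nikodym densities with respect to $m$, the same identity holds $m$-a.e.

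The Cauchy-Schwarz inequality for mutual energy measures (see e.g.\ \cite[(3.2.14) and Lemma 3.2.3]{FOT94}) gives, at the level of densities,
\[\left|\Gamma(f,g)\right|\leq \Gamma(f)^{1/2}\Gamma(g)^{1/2}\quad m\text{-a.e.,}\]
for any $f,g\in\mathcal{F}$. Integrating against $m$ and applying Cauchy-Schwarz in $L_2(X,m)$ yields
\[\left\|\Gamma(u_n)-\Gamma(u_m)\right\|_{L_1(X,m)}\leq \Gamma(u_n-u_m)(X)^{1/2}\:\Gamma(u_n+u_m)(X)^{1/2}.\]
By the Beurling-Deny decomposition recalled above we have $\Gamma(f)(X)\leq \mathcal{E}(f)$ for any $f\in\mathcal{F}$, so the right-hand side is dominated by
\[\mathcal{E}(u_n-u_m)^{1/2}\bigl(\mathcal{E}(u_n)^{1/2}+\mathcal{E}(u_m)^{1/2}\bigr).\]
Since $(u_n)_n$ is $\mathcal{E}$-Cauchy, the energies $\mathcal{E}(u_n)$ are bounded and $\mathcal{E}(u_n-u_m)\to 0$ as $n,m\to\infty$; therefore $(\Gamma(u_n))_n$ is Cauchy, hence convergent, in $L_1(X,m)$.

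The uniform $m$-integrability assertion is then a routine consequence of $L_1$-convergence: denoting the limit by $g\in L_1(X,m)$, given $\varepsilon>0$ pick $N$ with $\left\|\Gamma(u_n)-g\right\|_{L_1(X,m)}<\varepsilon/2$ for $n\geq N$, and then choose $\delta>0$ such that the finite family $\{\Gamma(u_1),\dots,\Gamma(u_{N-1}),g\}\subset L_1(X,m)$ has $\int_A|\cdot|\,dm<\varepsilon/2$ whenever $m(A)<\delta$; splitting $\Gamma(u_n)=g+(\Gamma(u_n)-g)$ for $n\geq N$ gives the uniform estimate. The main step where care is needed is justifying the pointwise Cauchy-Schwarz inequality for the densities; once this is cited from the standard theory, the remainder of the argument consists only of the bilinear identity, the inequality $\Gamma(f)(X)\leq\mathcal{E}(f)$, and an $\varepsilon/2$-argument.
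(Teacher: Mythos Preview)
Your proof is correct and follows essentially the same route as the paper: show that $(\Gamma(u_n))_n$ is Cauchy in $L_1(X,m)$ via a pointwise Cauchy--Schwarz type inequality for the densities, then apply the $L_2$ Cauchy--Schwarz inequality and the boundedness of $\mathcal{E}(u_n)$. The only cosmetic difference is that the paper factors $\Gamma(u_n)-\Gamma(u_m)=(\Gamma(u_n)^{1/2}-\Gamma(u_m)^{1/2})(\Gamma(u_n)^{1/2}+\Gamma(u_m)^{1/2})$ and invokes the pointwise bound $|\Gamma(f)^{1/2}-\Gamma(g)^{1/2}|\leq\Gamma(f-g)^{1/2}$ from \cite[Lemma~2.5(i)]{Hino10}, whereas you factor via bilinearity as $\Gamma(u_n-u_m,u_n+u_m)$ and invoke the equivalent pointwise Cauchy--Schwarz $|\Gamma(f,g)|\leq\Gamma(f)^{1/2}\Gamma(g)^{1/2}$; both inequalities stem from the $m$-a.e.\ nonnegativity of $\Gamma(sf+tg)$ and lead to the same final estimate.
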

\begin{proof}
As shown in \cite[Lemma 2.5(i)]{Hino10} we have
\begin{equation}\label{E:bilinearpointwise}
|\Gamma(f)^{1/2}(x)-\Gamma(g)^{1/2}(x)|\leq \Gamma(f-g)^{1/2}(x)\ \ \text{for $m$-a.a. $x\in X$}
\end{equation}
and for any $f,g\in\mathcal{F}$. This follows easily because $\Gamma(sf+tg)\geq 0$ for all $s,t\in\mathbb{Q}$ $m$-a.e.
Reasoning similarly as in \cite[Lemma 2.1]{Schmu92} and \cite[Lemma 2.5(ii)]{Hino10},
\begin{align}
\int_X |\Gamma(u_m)-\Gamma(u_n)|dm & =\int_X |\Gamma(u_m)^{1/2}-\Gamma(u_n)^{1/2}|\left(\Gamma(u_m)^{1/2}+\Gamma(u_n)^{1/2}\right)dm \notag\\
&\leq 2\left(\int_X |\Gamma(u_m)^{1/2}-\Gamma(u_n)^{1/2}|^2dm\right)^{1/2}\:\sup_n\mathcal{E}(u_n)^{1/2},\notag
\end{align}
and integrating (\ref{E:bilinearpointwise}) we see that $\left(\Gamma(u_n)\right)_n\subset L_1(X,m)$ is Cauchy in $L_1(X,m)$, hence convergent, what implies uniform integrability.
\end{proof}

\begin{remark}
Alternatively we could use the integral form of (\ref{E:bilinearpointwise}), 
\[\left|\left(\int_X\varphi d\Gamma(f)\right)^{1/2}-\left(\int_X\varphi d\Gamma(g)\right)^{1/2}\right|\leq \left(\int_X\varphi d\Gamma(f-g)\right)^{1/2}\]
for $f,g\in \mathcal{F}$ and any bounded non-negative Borel function $\varphi$, see \cite[p. 111]{FOT94}, to conclude the uniform integrability from the Vitali-Hahn-Saks Theorem, cf. \cite[Theorem III.7.2]{DS}.
\end{remark}

For later use we record a particular consequence of Lemma \ref{L:UI}. It is a tightness result for the jump part.

\begin{corollary}\label{C:jumppart}
Assume that $m$ is finite or that $X$ is second countable. Let $(w_n)_n\subset\mathcal{F}$ be an $\mathcal{E}$-Cauchy sequence and let $\varepsilon>0$. Then there exists a compact set $K\subset X$ such that 
\begin{equation}\label{E:tightjump}
\sup_n\int_X\int_{X\setminus K}(w_n(x)-w_n(y))^2J(dxdy)<\varepsilon.
\end{equation}
\end{corollary}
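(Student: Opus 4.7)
The plan is to reduce the claimed jump tightness to a tightness statement for the full energy measures $\Gamma(w_n)$ and then exploit the $L_1(X,m)$-convergence of their densities granted by Lemma \ref{L:UI}. First, by the symmetry of $J$ and Fubini,
\[\int_X\int_{X\setminus K}(w_n(x)-w_n(y))^2J(dxdy)=\Gamma_j(w_n)(X\setminus K),\]
and the Beurling-Deny decomposition (\ref{E:BDformeasures}) yields $\Gamma_j(w_n)\leq\Gamma(w_n)$ as nonnegative Radon measures. Hence it suffices to produce a compact set $K\subset X$ with $\sup_n\Gamma(w_n)(X\setminus K)<\varepsilon$. Lemma \ref{L:UI} says that the densities $\gamma_n:=\frac{d\Gamma(w_n)}{dm}$ form a Cauchy, hence convergent, sequence in $L_1(X,m)$; denote their limit by $\gamma\in L_1(X,m)$.

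Now split according to the two hypotheses. If $m$ is finite, then by inner regularity of the Radon measure $m$ on the locally compact Hausdorff space $X$, I pick a compact $K$ with $m(X\setminus K)<\delta$, where $\delta$ is the threshold produced by the uniform $m$-integrability assertion in Lemma \ref{L:UI} for the given $\varepsilon$; this immediately gives $\sup_n\Gamma(w_n)(X\setminus K)<\varepsilon$. If instead $X$ is second countable, then it is $\sigma$-compact, so I write $X=\bigcup_j K_j$ with $(K_j)$ an increasing sequence of compact sets. Choose $N$ large enough that $\|\gamma_n-\gamma\|_{L_1(X,m)}<\varepsilon/2$ for all $n\geq N$, and choose $j_0$ large enough that $\int_{X\setminus K_{j_0}}\gamma\,dm<\varepsilon/2$ (possible since $\gamma\in L_1(X,m)$ and $K_j\uparrow X$). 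Then for every $n\geq N$
\[\Gamma(w_n)(X\setminus K_{j_0})\leq\|\gamma_n-\gamma\|_{L_1(X,m)}+\int_{X\setminus K_{j_0}}\gamma\,dm<\varepsilon.\]
For each of the finitely many indices $n<N$, the finite Radon measure $\Gamma(w_n)$ is inner regular, so I find a compact $K^{(n)}$ with $\Gamma(w_n)(X\setminus K^{(n)})<\varepsilon$. Setting $K:=K_{j_0}\cup\bigcup_{n<N}K^{(n)}$ completes the argument.

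The main obstacle is the second case, where $m$ may have infinite mass and hence no compact $K$ satisfies $m(X\setminus K)<\delta$, so that Lemma \ref{L:UI} cannot be applied directly in its uniform integrability form. The resolution is to bypass $m$ altogether and instead leverage $L_1$-closeness of $\gamma_n$ to the limit density $\gamma$ in combination with the $\sigma$-compactness granted by second countability, after handling the initial indices separately by inner regularity of each individual $\Gamma(w_n)$.
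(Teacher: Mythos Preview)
Your proof is correct. The finite-$m$ case matches the paper's argument exactly. For the second countable case, however, you take a genuinely different route: the paper observes that $X$ is Polish, notes that the $L_1(X,m)$-convergence of $(\Gamma(w_n))_n$ forces weak convergence of the measures $\Gamma(w_n)\,dm$ along every subsequence, and then invokes Prohorov's theorem to conclude tightness. You instead use that second countability plus local compactness gives $\sigma$-compactness, pick an exhausting sequence $K_j\uparrow X$, and combine the $L_1$-closeness of $\gamma_n$ to $\gamma$ for large $n$ with inner regularity of the finitely many remaining $\Gamma(w_n)$. Your argument is more elementary in that it sidesteps Prohorov entirely and works directly with the $L_1$-limit; the paper's version is shorter once Prohorov is available but hides the mechanism. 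Both yield the same conclusion with the same hypotheses.
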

\begin{proof}
Assume first that $m $ is finite. With the very $\varepsilon>0$ as given, let $\delta>0$ be as in Lemma \ref{L:UI}. Since $m$ is a finite Radon measure, we can find some compact $K\subset X$ with $m(X\setminus K)<\delta$, so that Lemma \ref{L:UI} entails (\ref{E:tightjump}).
Now assume $m$ is arbitrary but $X$ is second countable, hence Polish. Let $g$ be the limit in $L_1(X,m)$ of the sequence $(\Gamma(w_n))_n$. Then the sequence of measures $(\Gamma(w_n)\:dm)_n$ converges weakly on $X$ to the measure $gdm$, and as all subsequences inherit this convergence, Prohorov's theorem tells that $(\Gamma(w_n)\:dm)_n$ is tight. This implies (\ref{E:tightjump}).
\end{proof}

With the aid of Lemma \ref{L:UI} we can establish a key Proposition which allows to switch from a given $\mathcal{E}$-Cauchy sequence to a sequence that is decreasing in uniform norm.

\begin{proposition}\label{P:subseq}
Let $(u_n)_n\subset \mathcal{C}$ be a sequence that is $\mathcal{E}$-Cauchy and converges to zero in $L_2(X,m)$. Then there are a sequence $(k_j)_j\subset \mathbb{N}$ with $\lim_j k_j=\infty$ and a subsequence $(v_j)_j$ of $(u_n)_n$ such that
\begin{equation}\label{E:subCauchy}
\lim_j \mathcal{E}\left(v_j-T_{\frac{1}{k_j}}(v_j)\right)=0.
\end{equation}
\end{proposition}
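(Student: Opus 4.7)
The plan is to combine the domination estimate of Lemma \ref{L:dominate} with the uniform $m$-integrability of energy measures along $\mathcal{E}$-Cauchy sequences (Lemma \ref{L:UI}), and then use Chebyshev's inequality to match truncation levels with rates of $L_2(X,m)$-decay. Specifically, Lemma \ref{L:dominate} reduces (\ref{E:subCauchy}) to showing
\[\Gamma(v_j)(\{|v_j| \geq 1/k_j\}) \to 0,\]
so the whole argument boils down to choosing the subsequence $(v_j)$ and the threshold indices $(k_j)$ in a compatible way.

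First I would invoke Lemma \ref{L:UI}: since $(u_n)$ is $\mathcal{E}$-Cauchy, the family of energy measures $(\Gamma(u_n))_n$ is uniformly absolutely continuous with respect to $m$, so that for every $\varepsilon>0$ there exists $\delta>0$ with $m(A)<\delta$ implying $\sup_n \Gamma(u_n)(A)<\varepsilon$. It therefore suffices to arrange $m(\{|v_j|\geq 1/k_j\})\to 0$. To do so, I would use the $L_2(X,m)$-convergence $u_n\to 0$ together with Chebyshev's inequality,
\[m(\{|u_n|\geq \alpha\}) \leq \alpha^{-2}\,\|u_n\|_{L_2(X,m)}^2.\]
Setting $k_j:=j$ and choosing a strictly increasing sequence $n_j$ with $\|u_{n_j}\|_{L_2(X,m)}\leq j^{-2}$ (possible since $\|u_n\|_{L_2(X,m)}\to 0$), and letting $v_j:=u_{n_j}$, Chebyshev yields
\[m(\{|v_j|\geq 1/k_j\}) \leq k_j^2\,\|v_j\|_{L_2(X,m)}^2 \leq j^{-2} \to 0.\]
Uniform integrability then forces $\Gamma(v_j)(\{|v_j|\geq 1/k_j\})\to 0$, and an application of Lemma \ref{L:dominate} gives (\ref{E:subCauchy}).

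The delicate point is the balancing of two competing rates: $k_j$ must tend to infinity (so the truncation threshold $1/k_j$ shrinks), yet the $m$-measure of the exceedance set $\{|v_j|\geq 1/k_j\}$ must still vanish. This is why $k_j$ cannot be chosen purely from the original index $n$; rather one must decouple the two via a diagonal selection that lets the $L_2(X,m)$-decay of $v_j$ outpace the growth of $k_j$. The Beurling-Deny decomposition and the finiteness of energy measures are already absorbed into Lemmas \ref{L:dominate} and \ref{L:UI}, so no further structural input is needed.
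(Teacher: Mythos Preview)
Your proof is correct and follows essentially the same route as the paper: reduce via Lemma~\ref{L:dominate} to controlling $\Gamma(v_j)(\{|v_j|\geq 1/k_j\})$, then use the uniform $m$-integrability from Lemma~\ref{L:UI} together with the fact that $m(\{|v_j|\geq 1/k_j\})\to 0$, which follows from the $L_2(X,m)$-convergence. The only cosmetic difference is that you fix $k_j=j$ up front and invoke Chebyshev explicitly to select $n_j$, whereas the paper first extracts $k_j$ from the modulus of uniform integrability and then uses convergence in measure to select $n_{k_j}$; the logical content is the same.
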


\begin{proof}
According to Lemma \ref{L:UI} for any $j\in\mathbb{N}\setminus\left\lbrace 0\right\rbrace$ there exists some other $k_j$ such that for any $k\geq k_j$ 
\begin{equation}\label{E:UI}
m(A)<\frac{1}{k}\ \ \text{ implies } \ \ \sup_l\int_A\Gamma(u_{n_l})dm<\frac{1}{j}
\end{equation}
for any Borel set $A\subset X$.
As $(u_n)_n$ converges to zero in $L_2(X,m)$ we further observe that for any $k$ there is some $n_k$ such that for and $n\geq n_k$ we have
\begin{equation}\label{E:weak}
m\left(|u_n|\geq\frac{1}{k}\right)<\frac{1}{k}.
\end{equation}
Combining (\ref{E:UI}) and (\ref{E:weak}) shows that for any $n\geq n_{k_j}$ we have 
\[\sup_l\int_{\left\lbrace |u_n|\geq\frac{1}{k_j}\right\rbrace}\Gamma(u_{n_l})dm<\frac{1}{j}.\]
Writing $v_j:=u_{n_{k_j}}$ and using Lemma \ref{L:dominate} we see that in particular
\[\mathcal{E}\left(v_j-T_{\frac{1}{k_j}}(v_j)\right)\leq c\int_{\left\lbrace |v_j|\geq\frac{1}{k_j}\right\rbrace}\Gamma(v_j)dm<\frac{1}{j}.\]
\end{proof}

We finally prove Theorem \ref{T:changeofmeasure}.

\begin{proof}
Let $(u_n)_n\subset\mathcal{C}$ be a sequence that is $\mathcal{E}$-Cauchy and converges to zero in $L_2(X,m)$. Let $(v_j)_j$ be the subsequence of $(u_n)_n$ and $(k_j)_j$ the corresponding sequence of indices with (\ref{E:subCauchy}), shown to exist in Proposition \ref{P:subseq}. Clearly $(v_j)_j$ is $\mathcal{E}$-Cauchy, too. We have
\[\mathcal{E}\left(T_{\frac{1}{k_j}}(v_j)-T_{\frac{1}{k_l}}(v_l)\right)^{1/2}\leq \mathcal{E}\left(T_{\frac{1}{k_j}}(v_j)-v_j\right)^{1/2}+\mathcal{E}\left(T_{\frac{1}{k_l}}(v_l)-v_l\right)^{1/2}+\mathcal{E}(v_j-v_l)^{1/2},\]
which by (\ref{E:subCauchy}) is arbitrarily small, provided $j$ and $l$ are large enough. Consequently 
\[w_j:=T_{\frac{1}{k_j}}(v_j)\]
defines an $\mathcal{E}$-Cauchy sequence $(w_j)_j$. By construction
\[\sup_{x\in X}|w_j(x)|\leq \frac{1}{k_j}\]
for all $j$. Below we will show that 
\begin{equation}\label{E:neededtoshow}
\lim_j \mathcal{E}(w_j)=0.
\end{equation}
Then another application of (\ref{E:subCauchy}) yields $\lim_j\mathcal{E}(v_j)=0$, and since $(v_j)_j$ is an $\mathcal{E}$-convergent subsequence of the $\mathcal{E}$-Cauchy sequence $(u_n)_n$, we necessarily have
\[\lim_n \mathcal{E}(u_n)=0.\]
Because $\mathcal{C}$ is dense in $C_0(X)$ and therefore in $L_2(X,m)$, the bilinear form $(\mathcal{E},\mathcal{C})$ is a closable, densely defined and positive definite symmetric bilinear form on $L_2(X,m)$. It is easily seen to satisfy the Markov property. 

To verify (\ref{E:neededtoshow}), let $\varepsilon>0$. Choose $j_0=j_0(\varepsilon)$ such that for any $j,l\geq j_0$ we have 
\begin{equation}\label{E:Cauchyest}
\mathcal{E}(w_j-w_l)^{1/2}\sup_j\mathcal{E}(w_j)^{1/2}<\varepsilon.
\end{equation}
Let $K\subset X$ be the compact set which satisfied (\ref{E:tightjump}) with this very $\varepsilon$. We may assume it contains the closure $\overline{U}$ of a relatively compact open neighborhood $U$ of $\supp w_{j_0}$, otherwise consider $K\cup \overline{U}$ instead. By Urysohn's lemma for locally compact spaces and by the regularity of $(\mathcal{E},\mathcal{F})$ we can find a function $\varphi\in\mathcal{C}$ such that $0\leq \varphi\leq 1$ and $\varphi\equiv 1$ on $K$. By (\ref{E:pointwisemult}) the product $\varphi w_l$ is a function in $\mathcal{C}$ for each $l$, note also that $\supp \varphi w_l$ is contained in $\supp \varphi \cap \supp w_l$. The sequence $(\varphi w_l)_l$ is $\mathcal{E}$-Cauchy and converges to zero pointwise on $X$. By bounded convergence it therefore tends to zero in $L_2(X,\mu)$, and by the closability of $(\mathcal{E},\mathcal{C})$ in this space we obtain 
\[\lim_l \mathcal{E}(\varphi w_l)=0.\]
In particular, there exists some $l_0\geq j_0$ such that for all $j\geq l_0$ we have
\begin{equation}\label{E:fixbound}
\mathcal{E}(\varphi w_j)^{1/2}\sup_j\mathcal{E}(w_j)^{1/2}<\varepsilon.
\end{equation}
Let $j\geq l_0$ and note that 
\begin{equation}\label{E:sumenergies}
\mathcal{E}(w_j)=\mathcal{E}(w_{j_0}, w_{j_0}-w_j)+\mathcal{E}(w_{j_0}, w_j).
\end{equation}
By Cauchy-Schwarz and (\ref{E:Cauchyest}) the first summand in (\ref{E:sumenergies}) is bounded by $\varepsilon$. For the strongly local part of the second summand we have
\begin{equation}\label{E:stronglylocal}
\mathcal{E}_c(w_{j_0},w_j)=\int_X\varphi \Gamma_c(w_{j_0}, w_j)=\mathcal{E}_c(w_{j_0},\varphi w_j)<\varepsilon,
\end{equation}
where we have used the product rule, \cite[Lemma 3.2.5]{FOT94}, the (strong) local property of $\Gamma_c$, \cite[Corollary 3.2.1]{FOT94}, Cauchy-Schwarz and (\ref{E:fixbound}). The killing part can be estimated in a similar manner, 
\begin{equation}\label{E:killing}
\mathcal{E}_k(w_{j_0},w_j)=\mathcal{E}_k(\varphi w_{j_0},w_j)=\mathcal{E}_k(w_{j_0},\varphi w_j)<\varepsilon.
\end{equation}
For the jump part we have 
\begin{multline}
\mathcal{E}_j(w_{j_0},w_j)=\int_X\int_K(w_{j_0}(x)-w_{j_0}(y))(w_j(x)-w_j(y))J(dxdy)\notag\\
+\int_X\int_{X\setminus K}(w_{j_0}(x)-w_{j_0}(y))(w_j(x)-w_j(y))J(dxdy).
\end{multline}
By Cauchy-Schwarz and Corollary \ref{C:jumppart} the second summand is less or equal $\varepsilon$. The first can be rewritten as 
\begin{multline}\label{E:rewrite}
\int_K\int_K(w_{j_0}(x)-w_{j_0}(y))(\varphi(x)w_j(x)-\varphi(y)w_j(y))J(dxdy)\\
+\int_{X\setminus K}\int_K(w_{j_0}(x)-w_{j_0}(y))(w_j(x)-w_j(y))J(dxdy).
\end{multline}
The first double integral in (\ref{E:rewrite}) is bounded by $\mathcal{E}_j(w_{j_0})^ {1/2}\mathcal{E}_j(\varphi w_j)^ {1/2}<\varepsilon$. Due to the symmetry of $J$ the second double integral equals 
\[\int_K\int_{X\setminus K}  (w_{j_0}(x)-w_{j_0}(y))(w_j(x)-w_j(y))J(dxdy),\]
and using Cauchy-Schwarz and Corollary \ref{C:jumppart} yet another time, it is seen to be less than $\varepsilon$. Clipping the estimates, we obtain
\[\mathcal{E}(w_{j_0},w_j)< 6\:\varepsilon\]
for $j\geq l_0$. As $\varepsilon>0$ was arbitrary, this and (\ref{E:sumenergies}) show (\ref{E:neededtoshow}). Note that the preceding proof simplifies considerably if $\mu$ is a finite measure.
\end{proof}

\section{Generalization to the measurable case}\label{S:measurablecase}

In this section we discuss Dirichlet forms on measure spaces as in \cite[Chapter I]{BH91}. Under some additional assumptions we can generalize Theorem \ref{T:changeofmeasure} to this set up.

Let $(X,\mathcal{X},\mu)$ be a $\sigma$-finite measure space and $(\mathcal{E},\mathcal{F})$ a symmetric Dirichlet form on $L_2(X,\mu)$. Denote the space of all bounded $\mathcal{X}$-measurable functions by $b\mathcal{X}$. We consider
\begin{equation}\label{E:B}
\mathcal{B}:=\left\lbrace f\in b\mathcal{X}: \text{the $\mu$-equivalence class of $f$ is in $\mathcal{F}\cap L_1(X,\mu)$}\right\rbrace.
\end{equation}
By (\ref{E:pointwisemult}), Cauchy-Schwarz and the Markov property the space $\mathcal{B}$ is seen to be a multiplicative Stonean vector lattice. We write $\sigma(\mathcal{B})$ to denote the $\sigma$-ring of subsets of $X$ generated by $\mathcal{B}$, cf. \cite{Dudley}. We also use the notation 
\[(\mathcal{B}\otimes\mathcal{B})_d:=\left\lbrace \sum_i f_i\otimes g_i\in\mathcal{B}\otimes\mathcal{B}: (\sum_i f_i\otimes g_i)(x,x)=0\ \text{for all $x\in X$}\right\rbrace\]
and write $\sigma((\mathcal{B}\otimes\mathcal{B})_d)$ for the $\sigma$-ring of subsets of $X\times X$ generated by the space of functions $(\mathcal{B}\otimes\mathcal{B})_d$. The symbol $\mathcal{M}(\sigma(B))$ stands for the space of all finite signed measures on $\sigma(\mathcal{B})$. 

To obtain a version of Theorem \ref{T:changeofmeasure} we make the following additional assumptions:

\begin{assumption}\label{A:1}
We assume there exists a function $\chi\in\mathcal{B}$ that vanishes nowhere on $X$.
\end{assumption}

\begin{assumption}\label{A:2}
The Dirichlet form $(\mathcal{E},\mathcal{F})$ admits a Beurling-Deny decomposition in the following sense: There is a uniquely determined triplet $(\Gamma_c, J, \kappa)$ consisting of
\begin{enumerate}
\item[(i)] a map $\Gamma_c:\mathcal{B}\times\mathcal{B}\to \mathcal{M}(\sigma(\mathcal{B}))$ that satisfies $\Gamma_c(f)\geq 0$ for any $f\in\mathcal{B}$ and
\begin{equation}\label{E:productrule}
\Gamma_c(fg,h)=f\Gamma_c(g,h)+g\Gamma_c(f,h)\ \ \text{ for any $f,g,h\in\mathcal{B}$},
\end{equation}
\item[(ii)] a nonnegative measure $J$ on $\sigma((\mathcal{B}\otimes\mathcal{B})_d)$ with $(\mathcal{B}\otimes\mathcal{B})_d\subset L_1(X\times X, J)$ and 
\[\int_X\int_XF(x,y)J(dxdy)=\int_X\int_XF(y,x)J(dxdy)\ \ \text{ for all $F\in (\mathcal{B}\otimes\mathcal{B})_d$},\]
\item[(iii)] a nonnegative measure $\kappa$ on $\sigma(\mathcal{B})$  with $\mathcal{B}\subset L_1(X,\kappa)$ 
\end{enumerate}
such that 
for any $f,g\in\mathcal{B}$ we have
\[\mathcal{E}(f,g)=\mathcal{E}_c(f,g)+\mathcal{E}_j(f,g)+\mathcal{E}_k(f,g),\]
where the summands on the right hand side are the total masses on $X$ of the finite signed measures on $\sigma(\mathcal{B})$ given by $\Gamma_c(f,g)$,
\[\Gamma_j(f,g)(dx):=\int_X(f(x)-f(y))(g(x)-g(y))J(dxdy)\]
and
\[\Gamma_k(f,g)(dx):=f(x)g(x)\kappa(dx).\]
\end{assumption}

Assumption \ref{A:2} implies that $(\mathcal{E},\mathcal{F})$ \emph{admits energy measures}: For any $f\in\mathcal{B}$ there exists a finite nonnegative measure $\Gamma(f)$ on $\sigma(\mathcal{B})$ such that 
\[\mathcal{E}(fh,f)-\frac12\mathcal{E}(f^2,h)=\int_X hd\Gamma(f), \ \ h\in\mathcal{B}.\]
Mutual energy measures $\Gamma(f,g)$, for $f,g\in\mathcal{B}$ can then be constructed by polarization. A nonnegative measure $m$ on $\sigma(\mathcal{B})$ will be called \emph{energy dominant} if all $\Gamma(f)$, $f\in\mathcal{B}$, are absolutely continuous with respect to $m$.
We can always construct energy dominant measures for $(\mathcal{E},\mathcal{F})$ using the idea of Lemma \ref{L:changemeasure}, and we may choose them to be minimal and finite.

The following statement generalizes Theorem \ref{T:changeofmeasure}.

\begin{theorem}\label{T:measurablecase}
Let $(X,\mathcal{X},\mu)$ be a $\sigma$-finite measure space and $(\mathcal{E},\mathcal{F})$ a symmetric Dirichlet form on $L_2(X,\mu)$ satisfying Assumptions \ref{A:1} and \ref{A:2}. Let $m$ be a finite energy dominant measure for $(\mathcal{E},\mathcal{F})$. Then $(\mathcal{E},\mathcal{B})$ is closable in $L_2(X,m)$ and its closure $(\mathcal{E},\mathcal{F}^{(m)})$ is a symmetric Dirichlet form.
\end{theorem}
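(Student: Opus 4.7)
The plan is to follow the scheme of the proof of Theorem \ref{T:changeofmeasure}, replacing the topological cutoff with one built from the strictly positive function $\chi$ of Assumption \ref{A:1}. First I would verify that the preliminary lemmas transfer to the measurable setting with only minor bookkeeping. The Markov property together with $T_\alpha(0)=0$ gives $T_\alpha(f)\in\mathcal{B}$ whenever $f\in\mathcal{B}$, so the truncation machinery is available, and the Beurling--Deny decomposition is provided by Assumption \ref{A:2}. Lemma \ref{L:dominate} then goes through with $\mathcal{C}$ replaced by $\mathcal{B}$: the jump and killing estimates are formal manipulations of the integral representations, and the local part requires only the measurable locality $\Gamma_c(g)(\{g=0\})=0$ for $g\in\mathcal{B}$, a standard consequence of the Leibniz rule (\ref{E:productrule}) (cf.\ \cite[Ch.~I]{BH91}). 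Lemma \ref{L:UI} is unchanged; since $m$ is finite, Corollary \ref{C:jumppart} applies via its finite-$m$ branch, and Proposition \ref{P:subseq} then follows verbatim.

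These reductions leave the situation where $(w_j)\subset\mathcal{B}$ is $\mathcal{E}$-Cauchy with $\|w_j\|_\infty\leq 1/k_j\to 0$ and $w_j\to 0$ in $L_2(X,m)$, and I must show $\mathcal{E}(w_j)\to 0$. In the compact case Urysohn's lemma supplied a cutoff $\varphi\in\mathcal{C}$ with $\varphi\equiv 1$ on a compact neighborhood of $\supp w_{j_0}$; in the measurable setting I would replace this by
\[
\varphi_N:=(N\chi)\wedge 1\in\mathcal{B},\qquad K_N:=\{\chi\geq 1/N\},
\]
so that $\varphi_N\equiv 1$ on $K_N$, $0\leq\varphi_N\leq 1$, and $K_N\uparrow X$ since $\chi$ vanishes nowhere. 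Finiteness of $m$ gives $m(X\setminus K_N)\to 0$. The critical observation is that $\varphi_N\in\mathcal{B}\subset L_2(X,\mu)$, so $\|\varphi_N w_l\|_{L_2(X,\mu)}\leq\|w_l\|_\infty\|\varphi_N\|_{L_2(X,\mu)}\to 0$; combined with (\ref{E:pointwisemult}) to check that $(\varphi_N w_l)_l$ is $\mathcal{E}$-Cauchy, the closability of $(\mathcal{E},\mathcal{B})$ in $L_2(X,\mu)$ (immediate from $(\mathcal{E},\mathcal{F})$ being a Dirichlet form on $L_2(X,\mu)$) will yield $\mathcal{E}(\varphi_N w_l)\to 0$ as $l\to\infty$.

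The final estimate then mirrors the compact-case argument. I would write $\mathcal{E}(w_j)=\mathcal{E}(w_{j_0},w_{j_0}-w_j)+\mathcal{E}(w_{j_0},w_j)$, control the first term by Cauchy--Schwarz and the $\mathcal{E}$-Cauchy property, and decompose the second by Beurling--Deny. The killing piece is handled directly via $|\mathcal{E}_k(w_{j_0},w_j)|\leq\|w_j\|_\infty\|w_{j_0}\|_{L_1(X,\kappa)}$, using Assumption \ref{A:2}(iii). For the local piece, Lemma \ref{L:UI} controls the $X\setminus K_N$-contribution of $\int d\Gamma_c(w_{j_0},w_j)$ by Cauchy--Schwarz with arbitrarily small weight, while on $K_N$ the product rule combined with $\varphi_N\equiv 1$ reduces the integrand to $\mathcal{E}_c(w_{j_0},\varphi_N w_j)$ plus a remainder of order $\|w_j\|_\infty$. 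The jump piece is split into $K_N\times K_N$ and its complements; the complements are controlled by Corollary \ref{C:jumppart} via Cauchy--Schwarz, while on $K_N\times K_N$ the substitution $w_j\leftrightarrow\varphi_N w_j$ is exact because $\varphi_N\equiv 1$ there. Each piece is $O(\varepsilon)$ for $N$, and then $j_0,j$, sufficiently large, and the Markov property of the closure $(\mathcal{E},\mathcal{F}^{(m)})$ is inherited from $(\mathcal{E},\mathcal{B})$. The main obstacle will be exactly the cutoff construction: Assumption \ref{A:1} is what substitutes for Urysohn's lemma, and once that and the measurable locality of $\Gamma_c$ are in place, the remaining bookkeeping parallels the locally compact case.
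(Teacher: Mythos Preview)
Your proposal is correct and follows essentially the same route as the paper's proof: both replace the Urysohn cutoff by a function built from the strictly positive $\chi$ of Assumption~\ref{A:1}, verify that Lemmas~\ref{L:dominate} and~\ref{L:UI}, Corollary~\ref{C:jumppart}, and Proposition~\ref{P:subseq} carry over to $\mathcal{B}$ via the measurable locality implied by the Leibniz rule~(\ref{E:productrule}), and then rerun the estimate of $\mathcal{E}(w_{j_0},w_j)$ piece by piece. The only cosmetic differences are that the paper uses the cutoff $\varphi_k=k(k+1)\bigl(\chi\wedge\tfrac{1}{k}-\chi\wedge\tfrac{1}{k+1}\bigr)$, which actually vanishes outside $\{\chi>\tfrac{1}{k+1}\}$ and so permits an exact locality identity on $A_k$, whereas your $\varphi_N=(N\chi)\wedge 1$ never vanishes and you absorb the discrepancy as an $O(\|w_j\|_\infty)$ remainder; both devices work, and your direct bound $|\mathcal{E}_k(w_{j_0},w_j)|\le\|w_j\|_\infty\|w_{j_0}\|_{L_1(X,\kappa)}$ for the killing part is in fact a slight simplification of the paper's treatment.
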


A proof follows by minor modifications of the arguments in Section \ref{S:closability}, we sketch them briefly. Note first that (\ref{E:productrule}) implies that $\Gamma_c$ is \emph{local} in the following sense: If $f\in\mathcal{B}$, $F:\mathbb{R}\to\mathbb{R}$ is Lipschitz with $F(0)=0$ and $F$ is constant on an interval $(a,b)\subset\mathbb{R}$, then 
\begin{equation}\label{E:generallocal}
\Gamma_c(F(f))\equiv 0\ \ \text{ on $\left\lbrace a<f<b\right\rbrace$}.
\end{equation}
This can be seen by a reasoning similar to \cite[Corollary 3.2.1]{FOT94}. Therefore the statement of Lemma \ref{L:dominate} holds for any $f\in\mathcal{B}$. Corresponding versions of Lemma \ref{L:UI} and Proposition \ref{P:subseq} hold anyway. Given an $\mathcal{E}$-Cauchy sequence $(u_n)_n$ that converges to zero in $L_2(X,m)$ we may proceed along the lines of the proof of Theorem \ref{T:changeofmeasure} in Section \ref{S:closability}, pass to a subsequence $(v_j)_j$ and consider the $\mathcal{E}$-Cauchy sequence $(w_j)_j$ with $w_j:=T_{\frac{1}{k_j}}(v_j)$ with $\sup_{x\in X}|w_j(x)|\leq 1/k_j$. Again we need to show (\ref{E:neededtoshow}), that is $\lim_j\mathcal{E}(w_j)=0$. If $\mu$ is finite this follows from the closability of $(\mathcal{E},\mathcal{F})$. To tackle the general case we use Assumption \ref{A:1}. Set $A_k:=\left\lbrace \chi>\frac{1}{k}\right\rbrace$, $k\in\mathbb{N}\setminus \left\lbrace 0\right\rbrace$. Then all $A_k$ have finite $\mu$-measure, $A_k\subset A_{k+1}$ for all $k$ and $X=\bigcup_k A_k$. By Lemma \ref{L:UI} and the finiteness and continuity of $m$ we can find some $k$ such that
\[\sup_j\Gamma(w_j)(X\setminus A_k)<\varepsilon.\]
The function 
\[\varphi_k:=k(k+1)(\chi\wedge \frac{1}{k}-\chi\wedge \frac{1}{k+1})\]
is a member of $\mathcal{B}$, equals $1$ on $A_k$ and vanishes outside $A_{k+1}$. Therefore $(\varphi_kw_j)_j$ converges to zero in $L_2(X,\mu)$ what implies
\[\lim_j \mathcal{E}(\varphi_k w_j)=0.\]
Now recall that given $\varepsilon>0$ we need to bound $\mathcal{E}(w_{j_0},w_j)$ in (\ref{E:sumenergies}). We have
\begin{equation}\label{E:referto}
\mathcal{E}(w_{j_0},w_j)=\int_{A_k}d\Gamma(w_{j_0}, w_j)+\int_{X\setminus A_k}d\Gamma(w_{j_0}, w_j)+\frac12\int_{A_k}d\Gamma_k(w_{j_0}, w_j)+\frac12\int_{X\setminus A_k}d\Gamma_k(w_{j_0}, w_j),
\end{equation}
and by Cauchy-Schwarz the second and fourth summand cannot exceed 
\[\sup_j \mathcal{E}(w_j)^{1/2}\sup_j\Gamma(w_j)(X\setminus A_k)^{1/2}.\] 
For the local part of the first summand in (\ref{E:referto}) we have 
\[\int_{A_k}d\Gamma_c(w_{j_0},w_j)=\int_{A_k}\varphi_k d\Gamma_c(w_{j_0},w_j)=\int_{A_k}d\Gamma(w_{j_0},\varphi_k w_j)\leq \mathcal{E}(\varphi_kw_j)^{1/2}\sup_j\mathcal{E}(w_j)^{1/2},\]
note that $A_k=\bigcup_{\delta>0}\left\lbrace \chi>\frac1k+\delta\right\rbrace$ and on each set $\left\lbrace \chi>\frac1k+\delta\right\rbrace$ we have $\Gamma_c(w_{j_0},\varphi_kw_j)=\Gamma_c(w_{j_0},w_j)$. The nonlocal part of the first summand in (\ref{E:referto}) can be rewritten and estimated in a similar manner as in (\ref{E:rewrite}), and a bound for the killing parts is straightforward. As before we may conclude that there is a constant $c>0$, independent of $j_0$, such that $\mathcal{E}(w_{j_0},w_j)<c\:\varepsilon$ for any sufficiently large $j$.

\section{Quasi-supports and an alternative proof}\label{S:alternative}

In this section we sketch an alternative proof of Theorem \ref{T:changeofmeasure}, this time based on potential theoretic arguments in \cite{FLJ, FST, KN91}. Throughout this section we adopt the hypotheses of these papers and therefore assume that $X$ is a locally compact compact separable metric space, $\mu$ a Radon measure on $X$ with full support and $(\mathcal{E},\mathcal{F})$ a regular symmetric Dirichlet form on $L_2(X,\mu)$ with core $\mathcal{C}$. 

The $1$-capacity associated with $(\mathcal{E},\mathcal{F})$ is defined as
\[\cpct(A):=\inf\left\lbrace \mathcal{E}_1(u): \text{$u\in\mathcal{F}$ and $u\geq 1$ $\mu$-a.e. on $A$}\right\rbrace \]
for open sets $A\subset X$. If the infimum is taken over the empty set, $\cpct(A)$ is set to be infinity. The $1$-capacity of an arbitrary subset $A\subset X$ is defined to be 
\[\cpct(A):=\inf\left\lbrace \cpct(B): \text{$B\supset A$, $B$ open}\right\rbrace.\]
If $(\mathcal{E},\mathcal{F})$ is transient we can also define the associated $0$-capacity using $\mathcal{E}$ in place of $\mathcal{E}_1$, it will be denoted by $\cpct_0$.

The following statement is an immediated consequence of \cite[Lemma 3.2.4]{FOT94} together with \cite[Lemma 2.3]{Hino10}, we just have to consider minimal energy dominant measures of type (\ref{E:typicalenergydominant}). 
\begin{theorem}
Let $m$ be a minimal energy dominant measure for $(\mathcal{E},\mathcal{F})$. Then $m$ charges no set of zero capacity.
\end{theorem}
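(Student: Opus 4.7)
The plan is to combine two ingredients. First, the standard fact from the theory of regular Dirichlet forms that each individual energy measure $\Gamma(f)$, $f\in\mathcal{F}$, is smooth and in particular does not charge any set of zero capacity; this is precisely \cite[Lemma 3.2.4]{FOT94}. Second, the uniqueness (up to mutual absolute continuity) of minimal energy dominant measures asserted in Lemma \ref{L:changemeasure}, which is itself essentially \cite[Lemma 2.3]{Hino10}.

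With these in hand I would proceed in two steps. First, produce a concrete minimal energy dominant measure $m_0$ of the form (\ref{E:typicalenergydominant}): fix a countable family $\left\lbrace f_n\right\rbrace_n\subset\mathcal{F}$ such that $\lin(\left\lbrace f_n\right\rbrace_n)$ is $\mathcal{E}_1$-dense in $\mathcal{F}$ with $0<\mathcal{E}(f_n)\leq 1$, and choose positive weights $(a_n)_n$ so that $m_0:=\sum_n a_n\Gamma(f_n)$ is a finite Radon measure on $X$. For any $A\subset X$ with $\cpct(A)=0$, the outer-regular definition of $\cpct$ yields an open (hence Borel) set $B\supset A$ with $\cpct(B)=0$ as well; then $\Gamma(f_n)(B)=0$ for every $n$ by \cite[Lemma 3.2.4]{FOT94}, so $m_0(B)=0$ by countable additivity, and consequently $m_0(A)=0$.

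Finally, the uniqueness clause of Lemma \ref{L:changemeasure} tells us that the given minimal energy dominant measure $m$ is mutually absolutely continuous with $m_0$; in particular $m(A)=0$ whenever $m_0(A)=0$, which gives the claim. I do not anticipate a serious obstacle: both ingredients are quoted directly from the literature, and the only point worth double-checking is the passage from arbitrary subsets to Borel subsets of zero capacity, which is immediate from the outer-regular definition of $\cpct$ adopted at the beginning of Section \ref{S:alternative}.
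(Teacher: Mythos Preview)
Your proposal is correct and follows essentially the same route as the paper, which simply records the result as an immediate consequence of \cite[Lemma 3.2.4]{FOT94} together with \cite[Lemma 2.3]{Hino10} applied to a minimal energy dominant measure of the explicit form (\ref{E:typicalenergydominant}). One small imprecision worth fixing: from $\cpct(A)=0$ the outer-regular definition gives open supersets $B_n\supset A$ with $\cpct(B_n)\to 0$, not a single open $B$ with $\cpct(B)=0$; take instead the $G_\delta$ set $B=\bigcap_n B_n$, which is Borel with $\cpct(B)=0$, and the rest of your argument goes through unchanged.
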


On the other hand, if $m$ is energy dominating but not necessarily minimal, we can consider the part of $m$ that is absolutely continuous with respect to $\cpct$. Since $m$ is a Radon measure and $\cpct$ is countably subadditive, there exist Radon measures $m_0$ and $m_1$ and a Borel set $N\subset X$ of zero capacity such that $m$ decomposes uniquely into the sum
\begin{equation}\label{E:splitmeasure}
m=m_0+m_1,
\end{equation}
where $m_0=\mathbf{1}_{X\setminus N}m$ is absolutely continuous with respect to $\cpct$ and $m_1=\mathbf{1}_N m$. See for instance \cite[Lemma 2.1]{FST}.

\begin{theorem}\label{T:alsoenergydom}
Let $m$ be an energy dominant measure for $(\mathcal{E},\mathcal{F})$ and $m_0$ its absolutely continuous part with respect to $\cpct$ as in (\ref{E:splitmeasure}). Then also $m_0$ is energy dominant.
\end{theorem}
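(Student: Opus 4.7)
The plan is to verify the three defining properties of energy dominance for $m_0$: it must be a Radon measure with full support such that every energy measure is absolutely continuous with respect to it. Radon-ness is immediate because $m_0=\mathbf{1}_{X\setminus N}m$ is majorized by the Radon measure $m$.

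The central input is the classical fact that energy measures of a regular symmetric Dirichlet form charge no set of zero capacity, namely \cite[Lemma 3.2.4]{FOT94}. For the absolute continuity $\Gamma(f)\ll m_0$ I would take a Borel set $A$ with $m_0(A)=0$ and split it as $A=(A\setminus N)\sqcup(A\cap N)$. On $A\setminus N$ the measures $m_0$ and $m$ coincide by (\ref{E:splitmeasure}), so $m(A\setminus N)=0$, and the assumed energy dominance of $m$ forces $\Gamma(f)(A\setminus N)=0$ for every $f\in\mathcal{F}$. On $A\cap N\subset N$ the cited lemma gives $\Gamma(f)(A\cap N)=0$, since $\cpct(N)=0$. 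Adding the two yields $\Gamma(f)(A)=0$.

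To obtain full support of $m_0$ I would invoke a minimal energy dominant measure $m^{\ast}$ produced by Lemma \ref{L:changemeasure}. Being a countable sum of energy measures as in (\ref{E:typicalenergydominant}), $m^{\ast}$ inherits the non-charging property $m^{\ast}(N)=0$, while minimality means $m^{\ast}\ll m$, so that $dm^{\ast}=h\,dm$ for some $\sigma(\mathcal{X})$-measurable $h\geq 0$. For any non-empty open $U\subset X$ full support of $m^{\ast}$ yields $\int_U h\,dm=m^{\ast}(U)>0$, and since $m^{\ast}(U\cap N)=0$ the positivity must come from $U\setminus N$, which forces $m_0(U)=m(U\setminus N)>0$.

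The only conceptual ingredient is the lemma that energy measures do not charge polar sets, which is exactly what permits discarding $m_1$ without losing domination; the remainder is bookkeeping with the decomposition (\ref{E:splitmeasure}). I do not anticipate any serious technical obstacle, and the argument is structurally identical in spirit to the reasoning used above in this section to prove that minimal energy dominant measures ignore capacity zero sets.
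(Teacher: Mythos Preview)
Your argument for absolute continuity of $\Gamma(f)$ with respect to $m_0$ is exactly the paper's proof: split $A$ into $A\setminus N$ and $A\cap N$, use $m$-dominance on the first piece and \cite[Lemma 3.2.4]{FOT94} on the second. The paper's proof stops there; you go further and also verify that $m_0$ is Radon (trivially, by majorization) and has full support, which are part of the definition of energy dominance but are left implicit in the paper. Your full-support argument via a minimal energy dominant measure $m^\ast$ is correct: the key points are that $m^\ast(N)=0$ (since $m^\ast$ is built from energy measures, which do not charge capacity-zero sets) and $m^\ast\ll m$ (by minimality), so $m^\ast(U)=m^\ast(U\setminus N)>0$ forces $m(U\setminus N)=m_0(U)>0$ for every nonempty open $U$. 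Note that the Radon--Nikodym density $h$ you introduce is not actually needed; the contrapositive of $m^\ast\ll m$ suffices.
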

\begin{proof}
Let $f\in\mathcal{F}$. If $A\subset X$ is a Borel set such that $0=m_0(A)=m(A\setminus N)$, then also $\Gamma(f)(A\setminus N)=0$, because $m$ is energy dominant. Since $\Gamma(f)$ does not charge sets of zero capacity we have $\Gamma(f)(A\cap N)\leq \Gamma(f)(N)=0$, too.
\end{proof}

\begin{remark}
The measure $m_0$ is generally not minimal energy dominant. For example, consider
\[\mathcal{E}(f)=\int_0^1 f'(x)^2dx,\]
$\mu(dx)=dx$ being the Lebesgue measure on $(0,1)$ and $\mathcal{F}\subset L_2(0,1)$ the $\mathcal{E}_1$-closure of $C^1_0(0,1)$. Then $(\mathcal{E},\mathcal{F})$ is a regular Dirichlet form on $L_2(0,1)$. Let $\left\lbrace f_n\right\rbrace_n\subset C_0(0,1)\cap \mathcal{F}$ be a countable family of functions with $(f_n')^2\leq 1$ and $\lin(\left\lbrace f_n\right\rbrace_n)$ dense in $\mathcal{F}$. 
Let $\delta_{1/2}$ be the normalized Dirac measure at $1/2$. Then both 
\[m':=\sum_n 2^{-n} (f_n'(x))^2dx \ \ \text{ and }\ \ m:=m'+\delta_{1/2}\]
are energy dominant measures for $(\mathcal{E},\mathcal{F})$. Since points have positive capacity, both are absolutely continuous with respect to $\cpct$, and in particular $m_0=m$ in (\ref{E:splitmeasure}). But $m$ is not minimal among the energy dominant measures, because 
\[m(\left\lbrace 1/2\right\rbrace)=1\ \ \text{ and }\ \ m'(\left\lbrace 1/2\right\rbrace)=0.\]
\end{remark}

A set $E\subset X$ is \emph{quasi-open} if for any $\varepsilon >0$ there exists an open set $G$ containing $E$ such that $\cpct(G\setminus E)=0$. A set is said to be \emph{quasi-closed} if it is the complement of a quasi-open set. A function on $X$ is called \emph{quasi-continuous} if for any $\varepsilon>0$ there exists an open set $G\subset X$ with $\cpct(G)<\varepsilon$ and the function is continuous on $X\setminus G$. Any element $u\in\mathcal{F}$ has an $m$-version that is quasi-continuous. See \cite[Theorem 2.1.3]{FOT94}. We will denote this version by $\widetilde{u}$. If a property holds on $X\setminus N$, where $N\subset X$ is a set of zero capacity, $\cpct(N)=0$, then we say this property holds \emph{quasi-everywhere}, abbreviated \emph{q.e.} For $A,B\subset X$ we write $A\subset B$ q.e. if $\cpct(A\setminus B)=0$. Given a nonnegative Radon measure $\nu$ on $X$ that charges no set of zero capacity, a set $\widetilde{F}\subset X$ is called a \emph{quasi-support} for $\nu$ if $\widetilde{F}$ is quasi-closed, $\nu(X\setminus \widetilde{F})=0$ and for any other set $\check{F}\subset X$ with these properties we have $\widetilde{F}\subset \check{F}$ q.e. The measure $\nu$ is said to have \emph{full quasi-support} if $X$ itself is a quasi-support for $\nu$. The following condition is necessary and sufficient for $\nu$ to have full quasi-support:
\begin{equation}\label{E:condqs}
\text{$\widetilde{u}=0$ $\nu$-a.e. if and only if $\widetilde{u}=0$ for any $u\in\mathcal{F}$.}
\end{equation}
A proof of this equivalence is given in \cite[Theorem 3.3]{FLJ}. Here we are interested in the quasi-supports of energy dominant measures.

\begin{theorem}\label{T:fullqs}
Assume $(\mathcal{E},\mathcal{F})$ is irreducible or transient. Let $m$ be an energy dominant measure for $(\mathcal{E},\mathcal{F})$ that does not charge sets of zero capacity. Then $m$ has full quasi-support.
\end{theorem}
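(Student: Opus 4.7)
The plan is to verify the characterization (\ref{E:condqs}). Let $u\in\mathcal{F}$ satisfy $\widetilde{u}=0$ $m$-a.e.\ and set $N:=\{\widetilde{u}\neq 0\}$, which is quasi-open by quasi-continuity of $\widetilde{u}$, and has $m(N)=0$ by hypothesis. Since $m$ is energy dominant, each of the measures $\Gamma_c(u)$, $\Gamma_j(u)$ and $\Gamma_k(u)$ is absolutely continuous with respect to $m$ and hence assigns zero mass to $N$. My central intermediate goal will be to upgrade these three vanishing statements to $\mathcal{E}(u)=0$.

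I would treat the three parts of the Beurling-Deny decomposition separately. The killing part is immediate: $\Gamma_k(u)(dx)=\widetilde{u}(x)^2\kappa(dx)$ is concentrated on $N^c$, where $\widetilde{u}$ vanishes identically, so $\mathcal{E}_k(u)=0$. For the strongly local part I would exploit the chain rule for $\Gamma_c$: approximating $u$ in $\mathcal{E}_1$-norm by normal contractions $\phi_n(u)$, where $\phi_n$ vanishes on $[-1/n,1/n]$ and agrees with $t\mapsto t-\operatorname{sgn}(t)/n$ outside this interval, one has by strong locality of $\Gamma_c$ that $\Gamma_c(\phi_n(u))$ is zero on $\{|\widetilde{u}|<1/n\}\supset\{\widetilde{u}=0\}$. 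Passing to the limit via the pointwise bound (\ref{E:bilinearpointwise}) would yield $\Gamma_c(u)(\{\widetilde{u}=0\})=0$, which together with $\Gamma_c(u)(N)=0$ gives $\mathcal{E}_c(u)=0$. For the jump part I would decompose $X\times X$ according to whether each coordinate lies in $N$ or $N^c$: the piece over $N^c\times N^c$ vanishes because $\widetilde{u}=0$ there; the pieces over $N\times N$ and $N\times N^c$ together reconstitute $\Gamma_j(u)(N)=0$ (so both of these nonnegative pieces individually vanish); and by symmetry of $J$ the piece over $N^c\times N$ equals the $N\times N^c$ piece. Hence $\mathcal{E}_j(u)=0$, and altogether $\mathcal{E}(u)=0$.

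With $\mathcal{E}(u)=0$ in hand, the two hypotheses dispose of the theorem. In the transient case the extended space $(\mathcal{F}_e,\mathcal{E})$ is a Hilbert space, so $\mathcal{E}(u)=0$ forces $u=0$ $\mu$-a.e., and hence $\widetilde{u}=0$ q.e. In the irreducible case $\mathcal{E}(u)=0$ implies that $u$ equals a constant $c$ $\mu$-a.e., so $\widetilde{u}\equiv c$ q.e.; because $m$ has full support and does not charge polar sets, the condition $\widetilde{u}=0$ $m$-a.e.\ forces $c=0$, and again $\widetilde{u}=0$ q.e. The main obstacle I anticipate is making rigorous the identity $\Gamma_c(u)(\{\widetilde{u}=0\})=0$: one must carefully combine the strong locality of $\Gamma_c$ (see \cite[Corollary 3.2.1]{FOT94}) with the convergence of $\Gamma_c(\phi_n(u))$ to $\Gamma_c(u)$, using uniform integrability estimates in the spirit of Lemma \ref{L:UI}.
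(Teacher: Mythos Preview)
Your argument is correct and reaches the same intermediate conclusion $\mathcal{E}(u)=0$ as the paper, by essentially the same splitting into local, jump and killing parts; this is the content of Lemma~\ref{L:zerolevel}. Your treatment of the local part is a minor technical variant: you use $\phi_n(u)=u-T_{1/n}(u)$, strong locality, and the $L_1(m)$-convergence $\Gamma_c(\phi_n(u))\to\Gamma_c(u)$ (which follows from $\phi_n(u)\to u$ in $\mathcal{E}_1$ together with the $\Gamma_c$-analog of (\ref{E:bilinearpointwise})), whereas the paper uses the chain rule for $F_n(s)=\int_0^s\varphi_n$ and invokes the closability of $(\mathcal{E},\mathcal{F})$ on $L_2(X,\mu)$ directly. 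Both work.

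Where you genuinely diverge from the paper is in the passage from $\mathcal{E}(u)=0$ to $\widetilde{u}=0$ q.e. The paper does \emph{not} argue via the extended Dirichlet space or via ``irreducible $\Rightarrow$ constant''. Instead it introduces the perturbed form $\mathcal{E}^{m}(f,g):=\mathcal{E}(f,g)+\langle f,g\rangle_{L_2(X,m)}$ on $\widetilde{\mathcal{F}}\cap L_2(X,m)$, cites \cite{KN91} for its regularity and transience, and then applies the weak capacitary inequality $\cpct_0^{m}(\{|\widetilde{u}|>\varepsilon\})\leq \varepsilon^{-2}\mathcal{E}^{m}(\widetilde{u})$; since $\mathcal{E}^{m}(\widetilde{u})=\mathcal{E}(u)+\|\widetilde{u}\|_{L_2(m)}^2=0$, this gives $\cpct_0^{m}(\{\widetilde{u}\neq 0\})=0$, and one descends to $\cpct$ because $\cpct\leq\cpct_1^{m}$. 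Your route is more elementary and self-contained (it needs only the standard dichotomy that an irreducible form is either transient or recurrent, and in the recurrent case null-energy functions are constant), while the paper's route stays closer to the existing change-of-measure literature \cite{FLJ,FST,KN91} that Section~\ref{S:alternative} is meant to showcase.
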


Irreducibility or transience are the standard assumptions for the results in \cite{FLJ, FST, KN91} that are relevant for our proof of Theorem \ref{T:alsoenergydom}. Note that they were not needed in Section \ref{S:closability}. The next corollary is an immediate consequence of Theorem \ref{T:alsoenergydom}.

\begin{corollary}
Assume $(\mathcal{E},\mathcal{F})$ is irreducible or transient. If $m$ is an energy dominant measure for $(\mathcal{E},\mathcal{F})$, then $m_0$ has full quasi-support.
\end{corollary}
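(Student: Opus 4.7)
The proof will be a short assembly of the two preceding results in the section. The plan is to verify that $m_0$ satisfies the hypotheses of Theorem \ref{T:fullqs} and then quote that theorem.

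First I would observe that $m_0$ is energy dominant for $(\mathcal{E},\mathcal{F})$: this is exactly the content of Theorem \ref{T:alsoenergydom}. Next, by the very construction of the decomposition (\ref{E:splitmeasure}), the measure $m_0 = \mathbf{1}_{X\setminus N}\,m$ is absolutely continuous with respect to the capacity $\cpct$, so in particular $m_0$ charges no set of zero capacity. Together with the standing assumption that $(\mathcal{E},\mathcal{F})$ is irreducible or transient, these are precisely the hypotheses of Theorem \ref{T:fullqs} applied to $m_0$ in place of $m$.

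Applying Theorem \ref{T:fullqs} to $m_0$ then yields that $m_0$ has full quasi-support, which is the claim. The only subtle point to check is the first one, namely that $m_0$ remains energy dominant after discarding the capacity-singular part $m_1$; but this is exactly what Theorem \ref{T:alsoenergydom} delivers, using that energy measures $\Gamma(f)$ never charge sets of zero capacity (by \cite[Lemma 3.2.4]{FOT94}). No additional obstacle is expected; the corollary is essentially a bookkeeping consequence once Theorems \ref{T:alsoenergydom} and \ref{T:fullqs} are in hand.
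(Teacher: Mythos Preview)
Your proposal is correct and matches the paper's approach exactly: the paper states that the corollary is an immediate consequence of Theorem \ref{T:alsoenergydom} (combined with the just-stated Theorem \ref{T:fullqs}), which is precisely the two-step assembly you describe. There is nothing to add.
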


Our main conclusion from Theorem \ref{T:fullqs} will be a special case of Theorem \ref{T:changeofmeasure}. It follows from Theorem \ref{T:fullqs} by the arguments of \cite[Section 5, in particular Theorem 5.3]{FLJ}.

\begin{theorem}
Let $(\mathcal{E},\mathcal{F})$ be a regular Dirichlet form on $L_2(X,\mu)$ with core $\mathcal{C}$. Assume $(\mathcal{E},\mathcal{F})$ is irreducible or transient and let $m$ be an energy dominant measure for $(\mathcal{E},\mathcal{F})$. Then $(\mathcal{E},\mathcal{C})$ is closable 
in $L_2(X,m)$.
\end{theorem}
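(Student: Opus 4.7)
The plan is to reduce the problem to the potential-theoretic closability criterion of \cite[Theorem 5.3]{FLJ} via the measure $m_0$ supplied by Theorem \ref{T:alsoenergydom} and the preceding Corollary. Since $m_0 = \mathbf{1}_{X \setminus N}\, m$ is bounded above by $m$, any $\mathcal{E}$-Cauchy sequence $(u_n) \subset \mathcal{C}$ with $u_n \to 0$ in $L_2(X,m)$ also satisfies $u_n \to 0$ in $L_2(X,m_0)$, so it suffices to prove closability of $(\mathcal{E},\mathcal{C})$ in $L_2(X,m_0)$. By the Corollary, $m_0$ is energy dominant, does not charge polar sets, and has full quasi-support, so I may and do assume henceforth that $m$ itself has all three properties.

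Next, with $(u_n) \subset \mathcal{C}$ an $\mathcal{E}$-Cauchy sequence with $u_n \to 0$ in $L_2(X,m)$, I would select quasi-continuous versions $\tilde{u}_n$ via \cite[Theorem 2.1.3]{FOT94}. In the transient case $(\mathcal{F}_e,\mathcal{E})$ is a Hilbert space, so $(u_n)$ converges in $\mathcal{E}$-norm to some $u \in \mathcal{F}_e$; in the irreducible recurrent case one argues in $\mathcal{F}_e$ modulo constants and uses $L_2(X,m)$-convergence to pin the constant to zero. The standard capacity estimate \cite[Lemma 2.1.4]{FOT94} together with a Borel--Cantelli argument provides a subsequence along which $\tilde{u}_n \to \tilde{u}$ q.e. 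On the other hand $L_2(X,m)$-convergence yields a further subsequence with $\tilde{u}_n \to 0$ $m$-a.e., and since $m$ does not charge polar sets this is an unambiguous statement about the quasi-continuous representatives, forcing $\tilde{u} = 0$ $m$-a.e. The full quasi-support characterization (\ref{E:condqs}) then upgrades this to $\tilde{u} = 0$ q.e., hence $u = 0$ in $\mathcal{F}_e$, and therefore $\mathcal{E}(u_n) = \mathcal{E}(u_n - u) \to 0$, which is the required closability.

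The main obstacle will be the simultaneous handling of two notions of convergence for $(\tilde{u}_n)$: the quasi-everywhere convergence coming from $\mathcal{E}$-completeness of $\mathcal{F}_e$ and the $m$-almost-everywhere convergence coming from $L_2(X,m)$. Reconciling them needs both that $m$ not charge polar sets (so that $m$-a.e. statements about quasi-continuous representatives are well-posed) and that $m$ have full quasi-support (so that $m$-a.e. vanishing promotes to quasi-everywhere vanishing). Supplying these two properties simultaneously is precisely the content of Theorem \ref{T:alsoenergydom} and its Corollary, and this is where the irreducibility-or-transience hypothesis enters, through the machinery of \cite{FLJ, FST, KN91}.
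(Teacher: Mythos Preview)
Your approach is essentially the paper's: reduce from $m$ to its absolutely continuous part $m_0$, invoke the Corollary to get full quasi-support, and then feed this into the closability criterion of \cite[Theorem 5.3]{FLJ}. The paper simply cites that last step, whereas you sketch its proof; your reduction $m \mapsto m_0$ in fact makes explicit what the paper leaves implicit in its one-line deduction.

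One point deserves care. Your sketch of the \cite{FLJ} argument is clean in the transient case, where $(\mathcal{F}_e,\mathcal{E})$ is a genuine Hilbert space and the $0$-capacity controls q.e.\ convergence. In the irreducible recurrent case, however, the step ``argue in $\mathcal{F}_e$ modulo constants and use \cite[Lemma 2.1.4]{FOT94} to extract q.e.\ convergence'' is not quite solid: that lemma is an $\mathcal{E}_1$-capacity estimate, and from $\mathcal{E}$-Cauchyness alone (no control in $L_2(X,\mu)$) you do not directly get q.e.\ convergence of a subsequence when the form is recurrent. The route taken in \cite{FLJ, KN91}, and mirrored in the paper's proof of Theorem~\ref{T:fullqs}, is to pass to the perturbed form $\mathcal{E}^{m}(f,g)=\mathcal{E}(f,g)+\langle f,g\rangle_{L_2(X,m)}$, which is regular and transient; one then runs the transient argument for $\mathcal{E}^{m}$ and compares capacities. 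If you replace your ``$\mathcal{F}_e$ modulo constants'' paragraph by this perturbation trick, the proof goes through and matches the references exactly.
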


To prove Theorem \ref{T:fullqs} we first establish the following lemma. It is a version of \cite[Theorem 5.2.3]{BH91}.
\begin{lemma}\label{L:zerolevel}
Let $u\in\mathcal{F}$ be such that $\widetilde{u}=0$ $\Gamma(u)$-a.e. Then also $\Gamma(u)(\left\lbrace \widetilde{u}=0\right\rbrace)=0$.
\end{lemma}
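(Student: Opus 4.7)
The plan is to use the Beurling--Deny decomposition (\ref{E:BDformeasures}) to split $\Gamma(u)=\Gamma_c(u)+\Gamma_j(u)+\tfrac12\Gamma_k(u)$ and then show separately that each of the three nonnegative measures $\Gamma_c(u)$, $\Gamma_j(u)$, and $\Gamma_k(u)$ assigns zero mass to $\{\widetilde{u}=0\}$. The killing part is immediate from the defining formula $\Gamma_k(u)(dx)=\widetilde{u}(x)^2\kappa(dx)$, which visibly vanishes on $\{\widetilde{u}=0\}$, and this step does not need the hypothesis.

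For the strongly local part I would invoke the standard level-set property $\Gamma_c(u)(\{\widetilde{u}=0\})=0$, which again holds without any extra assumption. A short derivation: decompose $u=u^+-u^-$, use the chain/contraction rule to write $\Gamma_c(u^{\pm})=\mathbf{1}_{\{\pm u>0\}}\Gamma_c(u)$, and observe that the cross term $\Gamma_c(u^+,u^-)$ vanishes by strong locality since $u^+$ and $u^-$ vanish on each other's support. Together these give $\Gamma_c(u)=\mathbf{1}_{\{u\neq 0\}}\Gamma_c(u)$, and the same identity passes to the quasi-continuous version since $\Gamma_c(u)$ does not charge sets of zero capacity.

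The only place where the hypothesis genuinely enters is the jump part, which I expect to be the main obstacle. From $\Gamma_j(u)\leq\Gamma(u)$ (a consequence of (\ref{E:BDformeasures})) together with $\widetilde{u}=0$ $\Gamma(u)$-a.e.\ I would read off
\[\int_X\int_X\mathbf{1}_{\{\widetilde{u}(x)\neq 0\}}\bigl(\widetilde{u}(x)-\widetilde{u}(y)\bigr)^2 J(dxdy)=0.\]
By the symmetry of $J$ the analogous identity with $\mathbf{1}_{\{\widetilde{u}(y)\neq 0\}}$ in place of $\mathbf{1}_{\{\widetilde{u}(x)\neq 0\}}$ holds as well, so $\bigl(\widetilde{u}(x)-\widetilde{u}(y)\bigr)^2=0$ for $J$-a.a.\ $(x,y)\in\{\widetilde{u}(x)\neq 0\}\cup\{\widetilde{u}(y)\neq 0\}$. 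On $\{\widetilde{u}(x)=0\}$ the integrand $\bigl(\widetilde{u}(x)-\widetilde{u}(y)\bigr)^2$ reduces to $\widetilde{u}(y)^2$ and is therefore supported on $\{\widetilde{u}(y)\neq 0\}$, which has just been shown to be $J$-negligible for this integrand. Hence $\Gamma_j(u)(\{\widetilde{u}=0\})=0$, and summing the three pieces yields the claim. The symmetrization step for $J$ is the only non-routine ingredient; the local and killing parts are formal consequences of the chain rule and of the explicit form of $\kappa$, respectively.
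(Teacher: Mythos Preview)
Your treatment of the killing and jump parts is correct and matches the paper's argument essentially line for line (the paper also uses the symmetry of $J$ to bound $\Gamma_j(u)(\{\widetilde u=0\})$ by $\Gamma_j(u)(\{\widetilde u\neq 0\})=0$).

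The difference lies in the strongly local part. The paper does \emph{not} simply quote the level-set property $\Gamma_c(u)(\{\widetilde u=0\})=0$; it proves it, following \cite[Theorem~5.2.3]{BH91}, by an approximation argument that uses the \emph{closability} of $(\mathcal{E},\mathcal{F})$: one takes $C^1$ functions $\varphi_n\to\mathbf 1_{\{0\}}$, sets $F_n(s)=\int_0^s\varphi_n$, shows via the $C^1$ chain rule that $(F_n(u))_n$ is $\mathcal{E}$-Cauchy and tends to $0$ in $L_2(X,\mu)$, and concludes $\Gamma_c(u)(\{\widetilde u=0\})=\lim_n\mathcal{E}_c(F_n(u))=0$ from closability.

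Your ``short derivation'' via $u=u^+-u^-$ is circular as written. Strong locality (say in the form of \cite[Corollary~3.2.1]{FOT94}) controls $\Gamma_c(u^{\pm})$ only on the \emph{quasi-open} sets $\{\widetilde u>0\}$ and $\{\widetilde u<0\}$; it says nothing about the quasi-closed level set $\{\widetilde u=0\}$. The identities $\Gamma_c(u^{\pm})=\mathbf 1_{\{\pm u>0\}}\Gamma_c(u)$ and $\Gamma_c(u^+,u^-)=0$ on $\{\widetilde u=0\}$ are instances of the Lipschitz (as opposed to $C^1$) chain rule, and the standard proof of that rule \emph{uses} the level-set property you are trying to derive---see the paper's Remark after the lemma. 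So your argument for the local part is not self-contained; it is correct only because the level-set property is a genuine theorem, whose proof (via closability) is precisely what the paper supplies.
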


\begin{proof}
For the jump part of the energy measure we observe
\begin{align}
\Gamma_j(u)(\left\lbrace \widetilde{u}=0\right\rbrace)&=\int_{\left\lbrace \widetilde{u}=0\right\rbrace}\int_X (\widetilde{u}(x)-\widetilde{u}(y))^2 J(dxdy)\notag\\
&=\int_{\left\lbrace \widetilde{u}=0\right\rbrace}\int_{\left\lbrace \widetilde{u}\neq 0\right\rbrace}(\widetilde{u}(x)-\widetilde{u}(y))^2J(dxdy)\notag\\
&\leq \int_{\left\lbrace \widetilde{u}\neq 0\right\rbrace}\int_X (\widetilde{u}(x)-\widetilde{u}(y))^2J(dxdy)\notag\\
&=\Gamma_j(u)(\left\lbrace \widetilde{u}\neq 0\right\rbrace)\notag\\
&=0\notag
\end{align}
by the symmetry of $J$. The killing part also vanishes,
\[\Gamma_k(u)(\left\lbrace \widetilde{u}=0\right\rbrace)=\int_{\left\lbrace \widetilde{u}=0\right\rbrace} \widetilde{u}(x)^2\kappa(dx)=0.\]
For the strongly local part we follow the argument of \cite[Theorem 5.2.3]{BH91} and use the closability of $(\mathcal{E},\mathcal{F})$. Let $(\varphi_n)_n$ be a sequence of $C^1(\mathbb{R})$-functions with $0\leq \varphi\leq 1$ and compactly supported that approximate $\mathbf{1}_{\left\lbrace 0\right\rbrace}$ pointwise. Then we also have
\begin{equation}\label{E:pointwiseae}
\lim_n \varphi_n(\widetilde{u})=\mathbf{1}_{\left\lbrace 0\right\rbrace}(\widetilde{u}) \ \ \text{$\Gamma(u)$-a.e.}
\end{equation}
By (\ref{E:pointwiseae}) and bounded convergence, $(\varphi_n(\widetilde{u}))_n$ converges to $\mathbf{1}_{\left\lbrace 0\right\rbrace}(\widetilde{u})$ in $L_2(X, \Gamma_c(u))$. For any $n$ set
\[F_n(s):=\int_{0}^s \varphi_n(t)dt.\]
By the chain rule \cite[Theorem 3.2.2]{FOT94} it now follows that
\begin{equation}\label{E:localenergytozero}
\lim_n \mathcal{E}_c(F_n(u))=\lim_n \int_X\varphi_n(\widetilde{u})^2d\Gamma_c(u)=\Gamma_c(u)(\left\lbrace \widetilde{u}=0\right\rbrace),
\end{equation}
and as $(\varphi_n(\widetilde{u}))_n$ is Cauchy in $L_2(X,\Gamma_c(u))$, the sequence $(F_n(u))_n$ is $\mathcal{E}_c$-Cauchy, because
\[\mathcal{E}_c(F_n(u)-F_k(u))=\int_X(\varphi_n(\widetilde{u})-\varphi_k(\widetilde{u}))^2d\Gamma_c(u).\]
But it is also $\mathcal{E}_j$- and $\mathcal{E}_k$-Cauchy: We have
\[|F_n(\widetilde{u})(x)-F_n(\widetilde{u})(y)|=|\int_{\widetilde{u}(x)}^{\widetilde{u}(y)}\varphi_n(t)dt|=|\widetilde{u}(x)-\widetilde{u}(y)|\int_0^1\varphi_n((1-\lambda)\widetilde{u}(x)+\lambda \widetilde{u}(y))d\lambda,\]
which is bounded by $|\widetilde{u}(x)-\widetilde{u}(y)|$ and goes to zero by bounded convergence on $(0,1)$. Dominated convergence (w.r.t. $J$) therefore shows that
\[\lim_n \mathcal{E}_j(F_n(\widetilde{u}))=\lim_n\int_X\int_X(F_n(\widetilde{u})(x)-F_n(\widetilde{u})(y))^2J(dxdy)=0.\]
For the killing part we similarly observe 
\[\lim_n\mathcal{E}_k(F_n(u))=\lim_n\int_XF_n(\widetilde{u}(x)^2\kappa(dx)=\int_X\left(\int_{0}^{\widetilde{u}(x)}\varphi_n(t)dt\right)^2\kappa(dx)=0.\]
Therefore $(F_n(u))_n$ is $\mathcal{E}$-Cauchy. By dominated convergence $(F_n(u))_n$ goes to zero in $L_2(X,\mu)$. The closability of $(\mathcal{E},\mathcal{F})$ therefore implies 
\[\lim_n \mathcal{E}_c(F_n(u))\leq \lim_n \mathcal{E}(F_n(u))=0,\]
and together with (\ref{E:localenergytozero}), $\Gamma_c(\left\lbrace \widetilde{u}=0\right\rbrace)=0$.
\end{proof}

\begin{remark}\mbox{}
\begin{enumerate}
\item[(i)] If $(\mathcal{E},\mathcal{F})$ is local, the condition $\widetilde{u}=0$ $\Gamma(u)$-a.e. is not needed. 
\item[(ii)] In the proof of \cite[Theorem 5.2.3]{BH91} closability was used to show that for any strongly local Dirichlet form $(\mathcal{E},\mathcal{F})$ we have $\Gamma(u)(\left\lbrace \widetilde{u}\in K\right\rbrace)=0$ for any $u\in\mathcal{F}$ and any set $K\subset \mathbb{R}$ of zero Lebesgue measure. This fact implies the validity of the chain rule for Lipschitz transformations. 
\end{enumerate}
\end{remark}

We prove Theorem \ref{T:fullqs}.
 
\begin{proof} It suffices to check condition (\ref{E:condqs}). If $u\in\mathcal{F}$ is such that $\widetilde{u}=0$ q.e. then also $\widetilde{u}=0$ $m$-a.e. because $m$ does not charge sets of zero capacity. To verify the converse, let $u\in\mathcal{F}$ be such that $\widetilde{u}=0$ $m$-a.e. Then we have $\Gamma(u)(X)=0$ by Lemma \ref{L:zerolevel} and therefore $\mathcal{E}(u)=0$. Now consider the case that $(\mathcal{E},\mathcal{F})$ is irreducible. Following \cite{FST} and \cite{KN91} set
\[\mathcal{E}^{m}(f,g):=\mathcal{E}(f,g)+\left\langle f,g\right\rangle_{L_2(X,m)}\]
for $f,g\in \widetilde{\mathcal{F}}\cap L_2(X,m)$, where $\widetilde{\mathcal{F}}$ denotes the collection of all $\mathcal{E}$-quasi-continuous versions of elements of $\mathcal{F}$. Then $(\mathcal{E}^{m},\widetilde{\mathcal{F}}\cap L_2(X,m))$ is a Dirichlet form on $L_2(X,\mu)$, see \cite[Lemma 6.1.1]{FOT94} and obviously $\mathcal{C}\subset \widetilde{\mathcal{F}}\cap L_2(X,m)$. Moreover, by \cite[Theorem 2.1 and Proposition 2.2]{KN91} the Dirichlet form $(\mathcal{E}^{m},\widetilde{\mathcal{F}}\cap L_2(X,m))$ is regular and transient. Note that the $m$-equivalence class of the Borel function $\widetilde{u}$ has zero $L_2(X,m)$-norm and hence is a member of $\widetilde{\mathcal{F}}\cap L_2(X,m)$. Now let $\cpct^{m}$ and $\cpct_0^{m}$ denote the $1$- and $0$-capacity associated with $(\mathcal{E}^{m},\widetilde{\mathcal{F}}\cap L_2(X,m))$, respectively. From their definition it follows that $\cpct_1^{m}$ dominates $\cpct_1$. Hence any $\mathcal{E}^{m}$-quasi-continuous version of $\widetilde{u}$ is also an $\mathcal{E}$-quasi-continuous version of $u$. We may therefore assume that $\widetilde{u}$ is $\mathcal{E}^{m}$-quasi-continuous. Then for any $\varepsilon>0$ the weak capacitary inequality 
\begin{equation}\label{E:weakcapineq}
\cpct_0^{m}(\left\lbrace|\widetilde{u}|>\varepsilon\right\rbrace)\leq \frac{1}{\varepsilon^2} \mathcal{E}^{m}(\widetilde{u})
\end{equation}
holds, and since $\mathcal{E}^{m}(\widetilde{u})=\mathcal{E}(u)=0$, we may conclude that $\cpct_0^{m}(\left\lbrace|\widetilde{u}|>0\right\rbrace)=0$. By transience this implies $\cpct_1^{m}(\left\lbrace|\widetilde{u}|>0\right\rbrace)=0$ and therefore $\cpct_1(\left\lbrace|\widetilde{u}|>0\right\rbrace)=0$, that is $\widetilde{u}=0$ q.e. If $(\mathcal{E},\mathcal{F})$ is transient the proof simplifies, because (\ref{E:weakcapineq}) is valid for $\mathcal{E}$ itself and a set is of zero $1$-capacity if and only if it is of zero $0$-capacity.
\end{proof}

\section{Dirichlet forms on the Gelfand spectrum}\label{S:transfer}

In some situations the space $X$ itself may not possess nice topological properties, or the Dirichlet form $(\mathcal{E},\mathcal{F})$ may not be 
regular. In \cite{HKT} we have discussed a method to 'embed' an image of $X$ into a locally compact Hausdorff space $\Delta$ and to transfer the original Dirichlet form $(\mathcal{E},\mathcal{F})$ on $X$ into a regular Dirichlet form $(\hat{\mathcal{E}},\hat{\mathcal{F}})$ on $\Delta$. To be more precise, let $(X,\mathcal{X},\mu)$ be a measure space and, as in Section \ref{S:measurablecase}, let $b\mathcal{X}$ denote the space of all bounded $\mathcal{X}$-measurable functions on $X$. Let $(\mathcal{E},\mathcal{F})$ be a Dirichlet form on $L_2(X,\mu)$ and recall the definition (\ref{E:B}) of the algebra $\mathcal{B}$, that is
\[\mathcal{B}:=\left\lbrace f\in b\mathcal{X}: \text{the $\mu$-equivalence class of $f$ is in $\mathcal{F}\cap L_1(X,\mu)$}\right\rbrace.\]
We additionally assume that $\mathcal{B}$ vanishes nowhere on $X$, this assumption is weaker than Assumption \ref{A:1}. Now consider the complex closure $A(\mathcal{B})$ of $\mathcal{B}$ in the supremum norm. The algebra $A(\mathcal{B})$ is a $\mathcal{C}^\ast$-algebra, and together with the Gelfand topology the set $\Delta$ of its nonzero characters becomes a locally compact Hausdorff space, called the \emph{Gelfand spectrum} of $A(\mathcal{B})$, see e.g. \cite{Kaniuth}. It 'contains' $X$, more precisely, each $x\in X$ defines a nonzero homomorphism $\iota(x)\in\Delta$ from the $\mathcal{C}^\ast$-algebra into $\mathbb{C}$ by $\iota(x)(f):=f(x)$, and the image $\iota(X)$ is a dense subset of $\Delta$. Moreover, there is a uniquely determined nonnegative Radon measure $\hat{\mu}$ on $\Delta$ such that 
\[\int_X f d\mu=\int_{\Delta}\hat{f}d\hat{\mu}\ \ \text{ for all $f\in\mathcal{B}$},\]
where $f\mapsto \hat{f}$ denotes the Gelfand transform, \cite{Kaniuth}. Setting 
\[\hat{\mathcal{E}}(\hat{f},\hat{g}):=\mathcal{E}(f,g), \ \ f,g\in \mathcal{B},\]
we obtain a densely defined, non-negative definite symmetric bilinear form on $L_2(\Delta,\hat{\mu})$. In \cite[Theorem 5.1]{HKT} we have shown that $(\hat{\mathcal{E}},\hat{\mathcal{B}})$ is closable in $L_2(X,\hat{\mu})$, and its closure $(\hat{\mathcal{E}},\hat{\mathcal{F}})$ is a symmetric regular Dirichlet form. Here $\hat{\mathcal{B}}$ denotes the image of $\mathcal{B}$ under the Gelfand transform. We refer to $(\hat{\mathcal{E}},\hat{\mathcal{F}})$ as the \emph{transferred Dirichlet form}.

\begin{remark}
For spaces $X$ that carry a (non-locally compact) topology a similar procedure has been used by several authors to embed $X$ into a locally compact space, \cite{AR89, Ku82}, for instance to construct a Hunt process on $X$ associated with $(\mathcal{E},\mathcal{F})$. To ensure the embedding has sufficiently nice properties, some additional conditions must be imposed, cf. \cite[Section 2]{AR89}.
\end{remark}

This setup can be combined with Lemma \ref{L:changemeasure} and Theorem \ref{T:changeofmeasure}. We use the notation $\hat{\mathcal{B}}_c:=\left\lbrace \hat{f}\in \hat{\mathcal{B}}: \text{ $\hat{f}$ has compact support}\right\rbrace$.

\begin{theorem}\label{T:generalform}
Let $(X,\mathcal{X},\mu)$ be a measure space and $(\mathcal{E},\mathcal{F})$ a Dirichlet form on $L_2(X,\mu)$. Assume that $\mathcal{B}$ vanishes nowhere on $X$. Then the following hold:
\begin{enumerate}
\item[(i)] There exists a minimal energy dominant measure $m$ for the transferred Dirichlet form $(\hat{\mathcal{E}},\hat{\mathcal{F}})$ on the locally compact Hausdorff space $\Delta$. 
\item[(ii)] The form $(\hat{\mathcal{E}},\hat{\mathcal{B}}_c)$ is closable in $L_2(\Delta,m)$, and its closure $(\hat{\mathcal{E}},\hat{\mathcal{F}}^{(m)})$ is a regular symmetric Dirichlet form.
\item[(iii)] The Dirichlet form $(\hat{\mathcal{E}},\hat{\mathcal{F}}^{(m)})$ admits a carr\'e du champ.
\end{enumerate}
\end{theorem}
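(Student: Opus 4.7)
The plan is to assemble the three conclusions from results already established in the paper, since the transferred form $(\hat{\mathcal{E}},\hat{\mathcal{F}})$ is now cast as a regular Dirichlet form on the locally compact Hausdorff space $\Delta$ with Radon reference measure $\hat{\mu}$ of full support (the latter because $\iota(X)$ is dense in $\Delta$ by Gelfand theory). For (i), this setup is exactly the hypothesis of Lemma \ref{L:changemeasure}, which directly produces a finite minimal energy dominant measure $m$ for $(\hat{\mathcal{E}},\hat{\mathcal{F}})$.

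For (ii), I would first verify that $\hat{\mathcal{B}}_c$ is a core for $(\hat{\mathcal{E}},\hat{\mathcal{F}})$. Since $A(\mathcal{B})$ is a uniformly dense $C^\ast$-subalgebra of $C_0(\Delta)$ and, by the construction in \cite{HKT}, $\hat{\mathcal{B}}$ is $\hat{\mathcal{E}}_1$-dense in $\hat{\mathcal{F}}$, the regularity of the transferred form (\cite[Theorem 5.1]{HKT}) lets one cut off an arbitrary $\hat{f}\in\hat{\mathcal{B}}$ by multiplying with a suitable Urysohn-type function chosen from $\hat{\mathcal{B}}$ itself (which equals $1$ on a given compact set and $0$ outside a larger one), losing no density thanks to the product inequality (\ref{E:pointwisemult}). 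Once $\hat{\mathcal{B}}_c$ is identified as a core, Theorem \ref{T:changeofmeasure} applied to $(\hat{\mathcal{E}},\hat{\mathcal{F}})$ with the finite energy dominant measure $m$ from (i) yields that $(\hat{\mathcal{E}},\hat{\mathcal{B}}_c)$ is closable in $L_2(\Delta,m)$ and that its closure $(\hat{\mathcal{E}},\hat{\mathcal{F}}^{(m)})$ is a regular symmetric Dirichlet form.

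For (iii), the crucial point is that for $\hat{f}$ in the common core $\hat{\mathcal{B}}_c$ the defining functional
\[
L_{\hat{f}}(\varphi)=\hat{\mathcal{E}}(\varphi\hat{f},\hat{f})-\tfrac12\hat{\mathcal{E}}(\hat{f}^2,\varphi),\qquad \varphi\in\hat{\mathcal{B}}_c,
\]
is intrinsic to the bilinear form and does not depend on the reference measure. Hence the energy measure $\Gamma(\hat{f})$ for the original form $(\hat{\mathcal{E}},\hat{\mathcal{F}})$ and the one for the new form $(\hat{\mathcal{E}},\hat{\mathcal{F}}^{(m)})$ coincide on the core, and the standard $\hat{\mathcal{E}}$-approximation scheme (identical for both forms on the core) propagates this agreement to $\hat{\mathcal{F}}\cap\hat{\mathcal{F}}^{(m)}$. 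Since $m$ was energy dominant for $(\hat{\mathcal{E}},\hat{\mathcal{F}})$, the same holds for $(\hat{\mathcal{E}},\hat{\mathcal{F}}^{(m)})$; and because the reference measure of the latter form \emph{is} $m$, the equivalence recorded right after (\ref{E:RNdensity}) gives the existence of a carr\'e du champ.

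The step I expect to require the most care is the verification that $\hat{\mathcal{B}}_c$ is a core, since it involves combining the Stone--Weierstrass-type density of $\hat{\mathcal{B}}$ in $C_0(\Delta)$ with the $\hat{\mathcal{E}}_1$-density of $\hat{\mathcal{B}}$ in $\hat{\mathcal{F}}$ and simultaneously preserving the compact-support condition via the product estimate (\ref{E:pointwisemult}). Everything else is a direct invocation of Lemma \ref{L:changemeasure}, Theorem \ref{T:changeofmeasure}, and the characterization of forms admitting a carr\'e du champ.
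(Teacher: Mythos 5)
Your proposal is correct and follows exactly the route the paper intends: the paper offers no separate proof of Theorem \ref{T:generalform} beyond the remark that the Gelfand-spectrum setup "can be combined with Lemma \ref{L:changemeasure} and Theorem \ref{T:changeofmeasure}," which is precisely your assembly of (i) from Lemma \ref{L:changemeasure}, (ii) from Theorem \ref{T:changeofmeasure} with the finite $m$, and (iii) from the fact that the new reference measure $m$ is itself energy dominant, hence the carr\'e du champ exists. Your explicit verification that $\hat{\mathcal{B}}_c$ is a core (via the cutoff and the product estimate (\ref{E:pointwisemult})) is the one detail the paper leaves implicit, relying on the regularity statement of \cite[Theorem 5.1]{HKT}.
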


Theorem \ref{T:generalform} says that any energy form can be transferred to a possibly larger state space on which it admits a carr\'e du champ (with respect to a suitable reference measure).

\begin{remark}
\cite[Example 6.1]{HKT} tells that in general we cannot expect to be able to pull back the energy measures of $(\hat{\mathcal{E}},\hat{\mathcal{F}})$ on $\Delta$ to $X$. The same happens with the measure $m$, which is well defined on $\Delta$, but may have no pull-back to $X$.
\end{remark}

\end{document}